\newtheorem{thm}{Theorem}
\newtheorem{defn}{Definition}
\newtheorem{lemma}{Lemma}
\newtheorem{pro}{Proposition}
\newtheorem{rk}{Remark}
\numberwithin{equation}{section} \setcounter{tocdepth}{1}
\newcommand*{\rom}[1]{\expandafter\@slowromancap\romannumeral #1@}
\begin{document}
	\title [Trajectories of an operator]
	{Dynamical systems of an infinite-dimensional non-linear operator}
	\author {U.R. Olimov, U.A. Rozikov}

\address{U.\ R. Olimov\\ V.I.Romanovskiy Institute of Mathematics,  9, Universitet str., 100174, Tashkent, Uzbekistan.}
\email {umrbek.olimov.92@mail.ru}

\address{ U.Rozikov$^{a,b}$\begin{itemize}
		\item[$^a$] V.I.Romanovskiy Institute of Mathematics,  9, Universitet str., 100174, Tashkent, Uzbekistan;
		\item[$^b$] National University of Uzbekistan,  4, Universitet str., 100174, Tashkent, Uzbekistan.
\end{itemize}}
\email{rozikovu@yandex.ru}

\	\begin{abstract} We investigate discrete-time dynamical systems generated by an infinite-dimensional non-linear operator that maps the Banach space $l_1$ to itself. It is demonstrated that this operator possesses up to seven fixed points. By leveraging the specific form of our operator, we illustrate that analyzing the operator can be simplified to a two-dimensional approach. Subsequently, we provide a detailed description of all fixed points, invariant sets for the two-dimensional operator and determine the set of limit points for its trajectories. These results are then applied to find the set of limit points for trajectories generated by the infinite-dimensional operator.
\end{abstract}
\maketitle

{\bf Mathematics Subject Classifications (2010).} 37L05, 37N05.

{\bf{Key words.}} Infinite dimensional operator, trajectory, fixed point, limit point, partial order.

\section{Introduction}
Our goal is to study dynamical systems generated by an infinite-dimensional non-linear operator that maps the Banach space $l_1$ to itself. Such operators appear in various areas of mathematics and physics. Examples include the Navier-Stokes equations in fluid dynamics, the nonlinear Schr\"odinger equation in quantum mechanics (representing physical observables such as position, momentum, and angular momentum), in variational problems (where one seeks to minimize or maximize a functional defined on an infinite-dimensional function space). Moreover, infinite-dimensional operators are central to the study of functional analysis, which deals with spaces of functions and their properties. Understanding the properties of these operators is essential for analyzing complex systems and phenomena in these fields.

The infinite-dimensional operator which we consider in this paper appears when investigating (gradient) Gibbs measures for models featuring an infinite set of spin values. For a detailed examination of gradient Gibbs measures associated with gradient potentials on Cayley trees, we refer to \cite{KMR, RoH, RoJSP, 9,  a, b, c} and the sources cited therein. 

The paper is structured as follows: In Section 2, we provide essential definitions, outline our problem formulation, and review relevant existing results. Section 3 focuses on a comprehensive analysis of discrete-time dynamical systems generated by an infinite-dimensional nonlinear operator that maps the Banach space $l_1$ to itself. We demonstrate that this operator has up to seven fixed points and reveal that its analysis can be streamlined into a two-dimensional framework. By leveraging our findings on the two-dimensional operator, we determine the set of limit points for trajectories produced by the infinite-dimensional operator.

\section{Preliminaries and the main problem}
Let us give some basic definitions and facts (see \cite[Chapter 1]{Romb}).
 Consider an operator $F$ defined on a topological space $X$.

For $A\subset X$ denote $F(A)=\{F(x): x\in A\}$.

If $F(A)\subset A$, then $A$ is called an invariant set under function $F$.

If $F(x)=x$ then the point $x\in X$ is called a fixed point for $F$.


Denote by $F^n(x)$ the $n$ times iterations of $F$ to $x\in X$.

For given topological space $X$, $x^{(0)}\in X$ and $F:X\to X$ the discrete-time dynamical system is defined as
\begin{equation}\label{0a2}
	x^{(0)},\ \ x^{(1)}= F(x^{(0)}), \ \ x^{(2)}=F^{2}(x^{(0)}),\ \  x^{(3)}= F^{3}(x^{(0)}),\dots
\end{equation}

{\bf The main problem:} for a given dynamical system is to
describe the limit points of $\{x^{(n)}\}_{n=0}^\infty$ for
arbitrary given $x^{(0)}$.

\begin{defn}\label{fp}
	A point $x\in X$ is called a fixed point for $F:X\to X$ if $F(x)=x$. The point $x$
	is a periodic point of period $p$ if $F^p(x) = x$.
\end{defn}

\begin{defn} A fixed point $x^*$ of $F$ 
	
	\begin{itemize}
		\item is attracting or stable if there
		is an open set (neighborhood) $U$ containing $x^*$ such that for all $x\in U$, the sequence $F^n(x)$
		with initial point $x$ converges to $x^*$.
		\item is repelling if there is an open set (neighborhood) $U$ containing
		$x^*$ such that for any $x\in U$, $x\ne x^*$, there is some $k\geq 1$ such that $F^k(x)\notin U$.
	\end{itemize}
\end{defn}
Consider a dynamical system where $X$ is a subset of $\mathbb R$
and $f$ is a continuous function on $X$.
Assume that $f$ is also continuously differentiable on $X$.

\begin{defn}
	A fixed point $x^*\in X$ is called hyperbolic if $|f'(x^*)|\ne 1$.
\end{defn}
The following theorem is well-known

\begin{thm}\label{tar}  Let $X \subset \mathbb R$ and $f$ be continuously
	differentiable on $X$. Let $x^*\in X$ be a hyperbolic fixed
	point of $f$ then
	\begin{itemize}
		\item[1)] If $|f'(x^*)|< 1$, then $x^*$ is attracting.
		\item[2)] If $|f'(x^*)|> 1$, then $x^*$ is repelling.
	\end{itemize}
\end{thm}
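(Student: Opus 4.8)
The plan is to obtain both statements from the Mean Value Theorem, using that $f'$ is continuous and that $f(x^*)=x^*$; this is the classical linearization argument for one-dimensional maps.

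For part 1), since $|f'(x^*)|<1$, I would first fix a constant $\lambda$ with $|f'(x^*)|<\lambda<1$ and then, by continuity of $f'$, choose $\delta>0$ so small that $|f'(x)|\le\lambda$ for every $x$ in the neighborhood $U=(x^*-\delta,x^*+\delta)\cap X$. For $x\in U$, applying the Mean Value Theorem to $f$ on the interval with endpoints $x$ and $x^*$ yields a point $\xi$ between them with $|f(x)-x^*|=|f(x)-f(x^*)|=|f'(\xi)|\,|x-x^*|\le\lambda|x-x^*|<\delta$. Hence $f(U)\subset U$, so by induction every iterate $f^n(x)$ stays in $U$ and $|f^n(x)-x^*|\le\lambda^n|x-x^*|\to 0$ as $n\to\infty$. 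Thus every trajectory starting in $U$ converges to $x^*$, i.e. $x^*$ is attracting.

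For part 2), the argument is the mirror image but needs a little more care. Fix $\mu$ with $1<\mu<|f'(x^*)|$ and, by continuity of $f'$, a $\delta>0$ with $|f'(x)|\ge\mu$ on $U=(x^*-\delta,x^*+\delta)\cap X$. Take any $x\in U$ with $x\ne x^*$. The key step is a first-exit argument: as long as $x,f(x),\dots,f^{k-1}(x)$ all lie in $U$, applying the Mean Value Theorem at each step gives $|f^{j+1}(x)-x^*|\ge\mu\,|f^{j}(x)-x^*|$ for $0\le j\le k-1$, hence $|f^{k}(x)-x^*|\ge\mu^{k}|x-x^*|$. Since $\mu>1$ and $|x-x^*|>0$, the right-hand side eventually exceeds $\delta$, so there is a smallest index $k\ge 1$ with $f^{k}(x)\notin U$, which is precisely the definition of $x^*$ being repelling.

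I expect the only genuine obstacle to be the bookkeeping in part 2): the derivative estimate must be run only up to the first exit time, so that at each intermediate iterate the Mean Value Theorem is legitimately applicable (each such iterate, and the segment joining it to $x^*$, must lie where the bound $|f'|\ge\mu$ holds); this is automatic once $U$ is taken to be an interval around $x^*$, as we may since $x^*$ lies in the relevant interval part of $X$. Everything else reduces to a routine geometric-type estimate.
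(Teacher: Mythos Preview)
Your argument is the standard, correct linearization proof via the Mean Value Theorem, and there is no genuine gap in it. Note, however, that the paper does not actually prove this theorem: it is introduced with ``The following theorem is well-known'' and stated without proof, so there is no paper proof to compare against. Your write-up would serve perfectly well as the omitted classical justification.
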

 Denote
$$l^{1}_{+}=\left\{x=(x_{1},x_{2},\dots,x_{n},\dots) \, : \, x_{i}>0, \, \|x\|=\sum_{j=1}^{\infty}x_j<\infty\right\}.$$

In this paper we study discrete-time, infinite-dimensional dynamical systems (IDDS) generated by operator $F$ defined on $ l^{1}_{+}$ as
\begin{equation}\label{e4o}
F:	\begin{array}{lll}
		x'_{2n-1}=\lambda_{2n-1}\cdot \left(\dfrac{1+\sum_{j=1}^{\infty}x_{2j-1}}{1+\theta+\|x\|}\right)^2,\\[2mm]
		x'_{2n}=\lambda_{2n}\cdot \left(\dfrac{1+\sum_{j=1}^{\infty}x_{2j}}{1+\theta+\|x\|}\right)^2,
	\end{array}
\end{equation}
where $n=1,2,\dots$, $\theta>0$ and $\lambda=(\lambda_1, \lambda_2, \dots)\in l^{1}_{+}$.

\begin{rk}
We note that this infinite dimensional non-linear operator is related to Gibbs measures of a physical system called a hard core model with a countable set of spin values on the Cayley tree (see \cite{OR}, \cite{KMR} and references therein).
\end{rk}

\begin{lemma} \cite{O} If $\lambda=(\lambda_1, \lambda_2, \dots)\in l^{1}_{+}$ then $F$ mapps $l^{1}_{+}$ to itself.
\end{lemma}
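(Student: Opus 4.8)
The plan is to verify directly that for any $x\in l^1_+$ the point $F(x)$ has strictly positive coordinates and finite $l_1$-norm. First I would introduce the two partial sums $S_{\mathrm{odd}}=\sum_{j=1}^{\infty}x_{2j-1}$ and $S_{\mathrm{even}}=\sum_{j=1}^{\infty}x_{2j}$. Since $x\in l^1_+$ all of its entries are positive and $\sum_{j=1}^{\infty}x_j=\|x\|<\infty$, so these two series are subseries of a convergent series of positive terms and therefore converge to finite strictly positive numbers with $S_{\mathrm{odd}}+S_{\mathrm{even}}=\|x\|$. In particular the common denominator $1+\theta+\|x\|$ appearing in \eqref{e4o} is a finite number that is $\ge 1+\theta>0$, so the two quantities
$$A=\frac{1+S_{\mathrm{odd}}}{1+\theta+\|x\|},\qquad B=\frac{1+S_{\mathrm{even}}}{1+\theta+\|x\|}$$
are well defined and strictly positive; moreover, since $\theta>0$ (and the remaining partial sum is nonnegative), each numerator is strictly smaller than the denominator, whence $0<A<1$ and $0<B<1$.

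Next I would read off from \eqref{e4o} that $x'_{2n-1}=\lambda_{2n-1}A^2$ and $x'_{2n}=\lambda_{2n}B^2$ for every $n\ge 1$. As each $\lambda_i>0$ and $A,B>0$, every coordinate of $F(x)$ is strictly positive. It then remains to estimate the norm:
$$\|F(x)\|=\sum_{n=1}^{\infty}x'_{2n-1}+\sum_{n=1}^{\infty}x'_{2n}=A^2\sum_{n=1}^{\infty}\lambda_{2n-1}+B^2\sum_{n=1}^{\infty}\lambda_{2n}\le\max\{A^2,B^2\}\,\|\lambda\|\le\|\lambda\|<\infty,$$
where splitting the sum into odd- and even-indexed parts is legitimate because all terms are positive (so the series converge unconditionally), and $\sum_{n}\lambda_{2n-1}+\sum_{n}\lambda_{2n}=\|\lambda\|$ because $\lambda\in l^1_+$. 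Hence $F(x)\in l^1_+$.

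There is no genuine obstacle here: the statement is a direct verification, and the only points that need a word of justification are (i) that one may break the convergent series $\sum_j x_j$ and $\sum_j\lambda_j$ into their odd- and even-indexed subseries, which is immediate from positivity of the terms, and (ii) that $1+\theta+\|x\|$ is finite and bounded away from $0$, which is exactly where the hypotheses $x\in l^1_+$ (finiteness of $\|x\|$) and $\theta>0$ enter. The bound $A,B<1$ is not strictly necessary for the conclusion — $\max\{A^2,B^2\}\,\|\lambda\|<\infty$ already suffices — but it is worth recording, since it yields the sharper estimate $\|F(x)\|<\|\lambda\|$, a contraction-type control on the norm that is convenient in the subsequent analysis.
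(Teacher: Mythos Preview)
Your argument is correct: you verify positivity of each coordinate and finiteness of the $l_1$-norm by splitting into odd- and even-indexed subseries, which is exactly the natural direct computation. Note that the present paper does not supply a proof of this lemma at all---it merely cites \cite{O}---so there is no in-paper proof to compare against; your verification is the straightforward one and needs no modification.
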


Define two-dimensional operator
 $W: z=(x,y)\in \mathbb R^2_+\to z'=(x',y')=W(z)\in \mathbb R^2_+$ by
\begin{equation}\label{op}
	W: \ \ \begin{array}{ll}
		x'=L_1\cdot \left(\dfrac{1+x}{1+\theta+x+y}\right)^2\\[2mm]
		y'=L_2\cdot\left(\dfrac{1+y}{1+\theta+x+y}\right)^2,
	\end{array}
\end{equation}
where $\theta>0$ and $L_i>0$ are parameters.

\begin{lemma}\label{Le} \cite{O} The IDDS generated by the operator $F$ is fully represented by the two-dimensional DS generated by the operator $W$.
\end{lemma}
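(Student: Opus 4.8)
The plan is to exhibit two explicit maps linking the two systems and to show that they intertwine $F$ and $W$. Put $L_1=\sum_{n=1}^{\infty}\lambda_{2n-1}$ and $L_2=\sum_{n=1}^{\infty}\lambda_{2n}$; since $\lambda\in l^1_+$, both series converge, $L_1,L_2>0$, and $L_1+L_2=\|\lambda\|<\infty$. Define the projection $\pi:l^1_+\to\mathbb R^2_+$ by $\pi(x)=\bigl(\sum_{j\ge1}x_{2j-1},\ \sum_{j\ge1}x_{2j}\bigr)$ and the embedding $\iota:\mathbb R^2_+\to l^1_+$ by $\iota(u,v)_{2k-1}=\frac{\lambda_{2k-1}}{L_1}u$, $\iota(u,v)_{2k}=\frac{\lambda_{2k}}{L_2}v$. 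First I would record the elementary facts that these are well defined and continuous: $\pi$ is linear with $\|\pi(x)\|_1\le\|x\|$, and $\iota$ is linear with $\|\iota(u,v)\|=|u|+|v|$, so $\iota$ is an isometric embedding of $(\mathbb R^2,\|\cdot\|_1)$ onto the $2$-dimensional (hence closed) subspace $\iota(\mathbb R^2)\subset l^1$; moreover $\pi\circ\iota=\mathrm{id}_{\mathbb R^2_+}$.

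The core of the proof is the pair of intertwining identities
\[
\pi\circ F=W\circ\pi,\qquad F\circ\iota=\iota\circ W,
\]
where $W$ is the operator \eqref{op} with parameters $L_1,L_2,\theta$. Both follow by direct computation from \eqref{e4o}. For the first: in the series $\sum_{n\ge1}x'_{2n-1}$ the factor $\bigl(\tfrac{1+\sum_j x_{2j-1}}{1+\theta+\|x\|}\bigr)^2$ is independent of $n$, so it may be pulled out of this positive-term, absolutely convergent sum, giving $\sum_{n\ge1}x'_{2n-1}=L_1\bigl(\tfrac{1+u}{1+\theta+u+v}\bigr)^2$ with $u=\sum_j x_{2j-1}$, $v=\sum_j x_{2j}$ --- exactly the first coordinate of $W(\pi(x))$ --- and the even sum is treated the same way. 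For the second: substituting $\iota(u,v)$ into \eqref{e4o} and using $\sum_j\iota(u,v)_{2j-1}=u$ and $\|\iota(u,v)\|=u+v$, one matches the output coordinatewise with $\iota(W(u,v))$. I would also note the consequence $F(l^1_+)\subseteq\iota(\mathbb R^2_+)$ (equivalently $\iota\circ\pi=\mathrm{id}$ on $F(l^1_+)$), since every $F(x)$ has the form of $\iota$ applied to the pair of squared-ratio constants occurring in \eqref{e4o}.

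Granting these identities, the reduction is immediate. Fix $x^{(0)}\in l^1_+$ and set $z^{(0)}=\pi(x^{(0)})$. Iterating $\pi\circ F=W\circ\pi$ gives $\pi(x^{(n)})=W^n(z^{(0)})$ for all $n\ge0$; since $x^{(1)}=F(x^{(0)})\in\iota(\mathbb R^2_+)$ and $F\circ\iota=\iota\circ W$, iterating from $x^{(1)}=\iota(W(z^{(0)}))$ gives $x^{(n)}=\iota\bigl(W^n(z^{(0)})\bigr)$ for all $n\ge1$. In particular $\iota$ restricts to a bijection between the fixed points (and, more generally, the periodic points) of $W$ and those of $F$; and, since $\pi$ and $\iota$ are continuous and $\iota(\mathbb R^2)$ is closed in $l^1$, passing to subsequences shows that $p$ is a limit point of the trajectory $\{x^{(n)}\}$ of $F$ if and only if $p=\iota(q)$ for some limit point $q$ of the trajectory $\{W^n(z^{(0)})\}$ of $W$. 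Thus every dynamical feature of the IDDS generated by $F$ --- its fixed points, invariant sets, and sets of limit points --- is obtained by applying $\iota$ to the corresponding feature of the two-dimensional system generated by $W$, which is the claim.

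All the computations here are routine; the only places that need a moment of care are the term-by-term manipulation of the series (legitimate because all terms are non-negative), the check that $\iota$ actually lands in $l^1_+$ (which is exactly where the finiteness $L_1+L_2=\|\lambda\|<\infty$ enters), and the remark that the initial point $x^{(0)}$ itself need not lie on $\iota(\mathbb R^2_+)$ --- this is harmless, since limit sets depend only on the tail $n\ge1$. I expect the only genuinely non-mechanical step to be pinning down what ``fully represented'' should mean, i.e.\ recognizing the two intertwining identities above as the correct formalization; once they are established nothing further is required.
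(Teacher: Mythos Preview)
The paper does not prove Lemma~\ref{Le} itself; it is quoted from \cite{O}, and only the content of the reduction is spelled out in Remark~\ref{ob} (define the odd/even partial sums $M_1^{(0)},M_2^{(0)}$, observe they evolve under $W$, and read off the $F$-limit via the formula~(\ref{xab})). Your argument is a clean, correct formalization of exactly this idea: your projection $\pi$ is the map $x\mapsto(M_1,M_2)$, your embedding $\iota$ is the map implicit in~(\ref{xab}), and the two intertwining identities $\pi\circ F=W\circ\pi$, $F\circ\iota=\iota\circ W$ together with $F(l^1_+)\subset\iota(\mathbb R^2_+)$ are precisely what is needed. So your approach coincides with the paper's (and presumably with that of \cite{O}); what you add is the explicit packaging in terms of $\pi$ and $\iota$, the continuity/closedness remarks that make the transfer of limit points rigorous, and the observation that the reduction bijectively carries fixed and periodic points as well.
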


\begin{rk}\label{ob}
Thus, by Lemma \ref{Le} we reduced investigation of trajectories $(x^{(m)})_{m=0}^\infty$, for all $x^{(0)}\in l^1_+$ to the study trajectories generated by the two-dimensional non-linear operator $W$ given in (\ref{op}). Namely,
for an initial point $x^{(0)}=(x_1^{(0)}, x_2^{(0)}, \dots)\in  l^{1}_{+}$, introducing
$$M^{(0)}_1=\sum_{j=1}^{\infty}x^{(0)}_{2j-1}, \ \ M^{(0)}_2=\sum_{j=1}^{\infty}x^{(0)}_{2j}$$
we note that the relation between limit points of operators $W$ and $F$ is given as:

if $\lim_{m\to \infty}W^m(M_1^{(0)}, M_2^{(0)})=(a,b)$ then
\begin{equation}\label{xab}\lim_{m\to \infty}F^m(x^{(0)})=\left({a\over L_1}\lambda_1, {b\over L_2}\lambda_2, {a\over L_1}\lambda_3, {b\over L_2}\lambda_4, \dots\right).
\end{equation} 	
\end{rk}

Therefore, it suffices to study DS of $W$. For simplicity we assume $L_1=L_2=L$.

\section{The case $L_1=L_2=L$}
Consider operator (\ref{op}) for the case $L_1=L_2=L$.
\subsection{Fixed points} In this subsection we review results of \cite{O} about fixed points of operator $F$. These are needed to describe limit points of the dynamical system.

To explicitly give fixed points we introduce the following notations:
$$N=\sqrt[3]{(45-9\theta+L)\sqrt{L}+\sqrt{216+648\theta+648\theta^2+216\theta^3+(1917-1026\theta -27\theta^2 +108L)L}}.$$
$$u_{1}=\frac{\sqrt{L}}{6}-\frac{6+6\theta-L}{6N}+\frac{1}{6}N,$$
$$u_{2}=\frac{\sqrt{L}}{6}+\frac{(1+i\sqrt{3})(6+6\theta-L)}{12N}-\frac{1}{12}(1-i\sqrt{3})N,$$
$$u_{3}=\frac{\sqrt{L}}{6}+\frac{(1-i\sqrt{3})(6+6\theta-L)}{12N}-\frac{1}{12}(1+i\sqrt{3})N,$$
Denote
\begin{equation}\label{umr} x_1=u_1^2,\ \ x_2=u_2^2, \ \ x_3=u_3^2.\end{equation}
 $$A_{1,0}=\left\{(\theta, L):L\leq\frac{(\theta+3)^2}{4},0<\theta\leq5\right\}
 \bigcup\left\{(\theta, L):L<4(\theta-1),\theta>5\right\},$$
$$A_{1,2}=\left\{(\theta, L):L>\frac{(\theta+3)^2}{4},0<\theta\leq17\right\}\bigcup\left\{(\theta, L):\frac{(\theta+3)^2}{4}\leq L<L_{1}, 17<\theta\leq9+8\sqrt{2}\right\}$$ $$\bigcup\left\{(\theta, L):L>L_{2},\theta>17\right\}\bigcup\left\{(\theta, L):L=4(\theta-1),\theta>5\right\},$$
$$A_{2,2}=\left\{(\theta, L):L=L_{2},\theta>17\right\}\bigcup\left\{(\theta, L):L=L_{1},17<\theta<9+8\sqrt{2}\right\},$$
$$A_{1,4}=\left\{(\theta, L):4(\theta-1)<L<\frac{(\theta+3)^2}{4},5<\theta\leq9+8\sqrt{2}\right\}$$ $$\bigcup\left\{(\theta, L):4(\theta-1)<L<L_{1},\theta>9+8\sqrt{2}\right\}$$
$$A_{3,2}=\left\{(\theta, L):L_{1}<L<L_{2},17<\theta\leq9+8\sqrt{2}\right\}$$ $$\bigcup\left\{(\theta, L):\frac{(\theta+3)^2}{4}<L<L_{2},\theta>9+8\sqrt{2}\right\},$$
$$A_{2,4}=\left\{(\theta, L):L=L_{1},\theta>9+8\sqrt{2}\right\},$$
$$A_{3,4}=\left\{(\theta, L):L_{1}<L<\frac{(\theta+3)^2}{4},\theta>9+8\sqrt{2}\right\}.$$

Denoting $A_{i,j}$ above means that the number of fixed points on $M_{0}=\{(x,y)\in\mathbb R_+^2: x=y\}$ is $i$ and the number of fixed points outside $M_{0}$ is $j$.

\begin{lemma}\label{lfp} \cite{O} The following assertions hold
\begin{equation}
	{\rm Fix}(W)= \left\{\begin{array}{lllllll}
		\{p^*_{1}\}, \ \ \mbox{if} \ \  (\theta, L)\in A_{1,0}\\[2mm]	
	\{p^*_{1}, p_{1}, p_{2}\}, \ \ \mbox{if} \ \
	(\theta, L)\in A_{1,2}\\[2mm]
	\{p^*_{1}, p^*_{2}, p_{1}, p_{2}\}, \ \ \mbox{if} \ \
 (\theta, L)\in A_{2,2}\\[2mm]
	\{p^*_{1}, p_{1}, p_{2}, p_{3}, p_{4}\}, \ \ \mbox{if} \ \
	(\theta, L)\in A_{1,4}\\	
	\{p^*_{1}, p^*_{2}, p^*_{3}, p_{1}, p_{2}\}, \ \ \mbox{if} \ \ (\theta, L)\in A_{3,2}\\[2mm]
	\{p^*_{1}, p^*_{2}, p_{1}, p_{2}, p_{3}, p_{4}\}, \ \ \mbox{if} \ \ (\theta, L)\in A_{2,4}\\[2mm]
	\{p^*_{1}, p^*_{2}, p^*_{3}, p_{1}, p_{2}, p_{3}, p_{4}\}, \ \ \mbox{if} \ \ (\theta, L)\in A_{3,4}.\end{array}
\right.
\end{equation}
where
$$p^*_i=(x_i, x_i), \, i=1,2,3 \ \ (\mbox{see} \ \ (\ref{umr})).$$
$$p_{1}=(t_{1},t_{2}), p_{2}=(t_{2},t_{1}), p_{3}=(t_{3},t_{4}), p_{4}=(t_{4},t_{3}),$$
with

%
%
%
%
%
%
%
%
%
%
%
%
 $$t_{1}={1\over 16}\left(\sqrt{L}+\sqrt{L-4(\theta-1)}+\sqrt{2L-4\theta-12+2\sqrt{L^2-4(\theta-1)L}}\right)^2,$$
$$t_{2}={1\over 16}\left(\sqrt{L}+\sqrt{L-4(\theta-1)}-\sqrt{2L-4\theta-12+2\sqrt{L^2-4(\theta-1)L}}\right)^2,$$
$$t_{3}={1\over 16}\left(\sqrt{L}-\sqrt{L-4(\theta-1)}+\sqrt{2L-4\theta-12-2\sqrt{L^2-4(\theta-1)L}}\right)^2,$$
$$t_{4}={1\over 16}\left(\sqrt{L}-\sqrt{L-4(\theta-1)}-\sqrt{2L-4\theta-12-2\sqrt{L^2-4(\theta-1)L}}\right)^2.$$
\end{lemma}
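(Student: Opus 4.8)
The plan is to solve $W(z)=z$ directly and exhaustively, following \cite{O}. Since all coordinates are strictly positive, extracting positive square roots and putting $u=\sqrt x$, $v=\sqrt y$ turns $W(z)=z$ into the symmetric system
\[
u\bigl(1+\theta+u^{2}+v^{2}\bigr)=\sqrt L\,(1+u^{2}),\qquad
v\bigl(1+\theta+u^{2}+v^{2}\bigr)=\sqrt L\,(1+v^{2}).
\]
Subtracting gives $(u-v)\bigl(1+\theta+u^{2}+v^{2}-\sqrt L\,(u+v)\bigr)=0$, so every fixed point lies either on the diagonal $M_{0}=\{u=v\}$ or satisfies $1+\theta+u^{2}+v^{2}=\sqrt L\,(u+v)$. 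Since $W$ commutes with the swap $(x,y)\mapsto(y,x)$, the fixed points off $M_{0}$ come in pairs $(a,b),(b,a)$, so their number $j$ is in $\{0,2,4\}$.

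On $M_{0}$, substituting $v=u$ yields the cubic $2u^{3}-\sqrt L\,u^{2}+(1+\theta)u-\sqrt L=0$, whose left side is negative for all $u\le0$ and tends to $+\infty$, so it has at least one positive root; this gives $p^{*}_{1}$, so ${\rm Fix}(W)$ always contains $p^{*}_{1}$. Solving the cubic by Cardano after the shift $u\mapsto u+\frac{\sqrt L}{6}$ produces $u_{1},u_{2},u_{3}$ (hence $x_{i}=u_{i}^{2}$), and one verifies that the radicand of $N$ is a fixed positive multiple of $\frac{q^{2}}{4}+\frac{p^{3}}{27}$, i.e.\ of the negative of the cubic's discriminant, and that, as a polynomial in $L$, it equals the quadratic $108L^{2}+(1917-1026\theta-27\theta^{2})L+216(1+\theta)^{3}$; its roots, when real and positive, are by definition $L_{1}(\theta)\le L_{2}(\theta)$. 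Consequently the cubic has three real (necessarily positive) roots iff $L_{1}<L<L_{2}$, and a single real root when $L$ is outside $[L_{1},L_{2}]$; for $\theta>17$ it has a double positive root when $L\in\{L_{1},L_{2}\}$, while at $\theta=17$ these collapse into a triple root (still one point), and $L_{1},L_{2}$ are real and positive exactly for $\theta>17$. This accounts for the number $i$ of fixed points on $M_{0}$ and for the threshold $\theta=17$.

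Off $M_{0}$, adding the two equations and using the relation $1+\theta+u^{2}+v^{2}=\sqrt L\,(u+v)$ gives $(u+v)^{2}=2+u^{2}+v^{2}$, hence $uv=1$. Writing $s=u+v$, the relation reduces to $s^{2}-\sqrt L\,s+(\theta-1)=0$, so $s_{1,2}=\frac12\bigl(\sqrt L\pm\sqrt{L-4(\theta-1)}\bigr)$, and then $u,v$ solve $t^{2}-st+1=0$, i.e.\ $u,v=\frac12\bigl(s\pm\sqrt{s^{2}-4}\bigr)$; squaring these reproduces $t_{1},\dots,t_{4}$ exactly, with $s_{1}$ yielding $p_{1}=(t_{1},t_{2})$, $p_{2}=(t_{2},t_{1})$ and $s_{2}$ yielding $p_{3},p_{4}$. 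A value $s_{i}$ contributes a genuine off-diagonal pair iff it is real (iff $L\ge4(\theta-1)$) and $s_{i}>2$ (so $u,v$ are real, positive and distinct); the borderline $s_{i}=2$, where $u=v=1$, occurs exactly on the curve $L=\frac{(\theta+3)^{2}}{4}$, on which $(1,1)$ is a fixed point lying on $M_{0}$. Since $s_{1}$ is increasing and $s_{2}$ decreasing in $L$, with common value $\sqrt{\theta-1}$ at $L=4(\theta-1)$, the count $j$ is determined by the position of $L$ relative to the curves $L=4(\theta-1)$ and $L=\frac{(\theta+3)^{2}}{4}$ (and is nonzero only when $L>\frac{(\theta+3)^{2}}{4}$, or when $\theta>5$, where $s_{2}$ can exceed $2$).

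It remains to overlay these two partitions of $\{(\theta,L):\theta,L>0\}$ and read off $(i,j)$, equivalently ${\rm Fix}(W)$, on each cell. The distinguished abscissae are $\theta=5$ (tangency of $L=\frac{(\theta+3)^{2}}{4}$ and $L=4(\theta-1)$, their difference being $\frac{(\theta-5)^{2}}{4}$), $\theta=17$ (where $L_{1}=L_{2}$), and $\theta=9+8\sqrt2$ (where $L=\frac{(\theta+3)^{2}}{4}$ crosses $L_{1}$, obtained by substituting $L=\frac{(\theta+3)^{2}}{4}$ into the quadratic above and reducing to $\theta^{2}-18\theta-47=0$); the seven listed regions are the resulting cells, with their boundaries assigned open or closed according to the multiplicity analysis. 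I expect the main obstacle to be precisely this last, combinatorial step: establishing the exact vertical ordering of the four curves $L=4(\theta-1)$, $L=\frac{(\theta+3)^{2}}{4}$, $L=L_{1}(\theta)$, $L=L_{2}(\theta)$ over each $\theta$-interval, confirming that they are the only loci where $i$ or $j$ jumps, and allocating the equality cases so that the seven descriptions of ${\rm Fix}(W)$ are mutually exclusive and cover the whole parameter region.
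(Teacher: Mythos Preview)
The paper does not give its own proof of this lemma: it is quoted verbatim from \cite{O} and used as input for the rest of Section~3. There is therefore nothing to compare your argument against in the present paper. That said, your plan is sound and complete in outline. The substitution $u=\sqrt x$, $v=\sqrt y$, the factorisation $(u-v)\bigl(1+\theta+u^{2}+v^{2}-\sqrt L\,(u+v)\bigr)=0$, the cubic on the diagonal, and the pair $uv=1$, $s^{2}-\sqrt L\,s+(\theta-1)=0$ off it are exactly the expected reductions; your formulas for $s_{1,2}$ and for $u,v$ reproduce $t_{1},\dots,t_{4}$ on the nose, and your identification of the radicand of $N$ with the discriminant quadratic $108L^{2}+(1917-1026\theta-27\theta^{2})L+216(1+\theta)^{3}$ is the right link between the Cardano expression and the thresholds $L_{1},L_{2}$ (which in this paper are later rewritten as $\hat L_{1},\hat L_{2}$). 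The tangency at $\theta=5$, the coalescence $L_{1}=L_{2}$ at $\theta=17$, and the crossing of $L=\tfrac{(\theta+3)^{2}}{4}$ with $L=L_{1}$ at $\theta=9+8\sqrt2$ are all correctly located.

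The one place where you should not leave things at the level of ``I expect the main obstacle to be\dots'' is the final overlay. To finish you must actually prove the ordering of the four curves on each of the $\theta$-intervals $(0,5]$, $(5,17]$, $(17,9+8\sqrt2]$, $(9+8\sqrt2,\infty)$, check on each resulting cell that the stated $(i,j)$ is achieved (in particular that all three roots of the cubic are \emph{positive} when real, and that $s_{2}>2$ forces $\theta>5$), and verify that the boundary allocations---notably that the segment $L=4(\theta-1)$, $\theta>5$ is put in $A_{1,2}$ rather than $A_{1,0}$ (because $p_{1}=p_{2}$ degenerates there to an off-diagonal pair coinciding, not to a diagonal point), and the handling of $L=\tfrac{(\theta+3)^{2}}{4}$---match the closed/open choices in the definitions of the $A_{i,j}$. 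None of this is hard, but it is the content of the lemma and must be written out, not merely announced.
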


Thus for fixed points of the infinite-dimensional operator $F$ we have the following theorem.

\begin{thm}\label{tfp} \cite{O} Fixed points of operator (\ref{e4o}) for $L_{1}=L_{2}=L$ are as follows:

$${\rm Fix}(F)=\left\{\begin{array}{ll}
	\{P_1\}, \ \ \mbox{if} \ \ (\theta,L)\in A_{1,0},\\[2mm]
	\{P_1, P_4, P_5\} \ \ \mbox{if} \ \ (\theta,L)\in A_{1,2},\\[2mm]
	\{P_1, P_2, P_4, P_5\}  \ \ \mbox{if} \ \ (\theta,L)\in A_{2,2},\\[2mm]
	\{P_1, P_4, P_5, P_6, P_7\}  \ \ \mbox{if} \ \ (\theta,L)\in A_{1,4},\\[2mm]
	\{P_1, P_2, P_3, P_4, P_5\}  \ \ \mbox{if} \ \ (\theta,L)\in A_{3,2},\\[2mm]
	\{P_1, P_2, P_4, P_5, P_6, P_7\} \ \ \mbox{if} \ \ (\theta,L)\in A_{2,4},\\[2mm]
	\{P_1, P_2, P_3, P_4, P_5, P_6, P_7\} \ \ \mbox{if} \ \ (\theta,L)\in A_{3,4};
\end{array}\right.$$
where
$$\begin{array}{ll}
	P_i={x_i\over L}(\lambda_1, \lambda_2, \lambda_3,...), \ \ i=1,2,3.\\[3mm]
		P_4={1\over L}(t_1\lambda_1, t_2\lambda_2, t_1\lambda_3,...), \ \ P_5={1\over L}(t_2\lambda_1, t_1\lambda_2, t_2\lambda_3,...),\\[3mm]
	P_6={1\over L}(t_3\lambda_1, t_4\lambda_2, t_3\lambda_3,...), \ \
	P_7={1\over L}(t_4\lambda_1, t_3\lambda_2, t_4\lambda_3,...).
\end{array}$$
\end{thm}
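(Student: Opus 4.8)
The plan is to transfer the classification of $\mathrm{Fix}(W)$ from Lemma \ref{lfp} to $F$ via the reduction already established in Lemma \ref{Le} and Remark \ref{ob}. First I would observe that if $x=(x_1,x_2,\dots)\in l^1_+$ is a fixed point of $F$, then setting $M_1=\sum_j x_{2j-1}$ and $M_2=\sum_j x_{2j}$, summing the odd-indexed equations of (\ref{e4o}) and the even-indexed equations separately, and using $\sum_j\lambda_{2j-1}=L_1=L$, $\sum_j\lambda_{2j}=L_2=L$, $\|x\|=M_1+M_2$, shows that $(M_1,M_2)$ must be a fixed point of $W$ in (\ref{op}) with $L_1=L_2=L$. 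Conversely, each equation of (\ref{e4o}) then forces
$$x_{2n-1}=\lambda_{2n-1}\Big(\tfrac{1+M_1}{1+\theta+M_1+M_2}\Big)^2=\tfrac{\lambda_{2n-1}}{L}\,W_1(M_1,M_2)=\tfrac{\lambda_{2n-1}}{L}M_1,$$
and similarly $x_{2n}=\tfrac{\lambda_{2n}}{L}M_2$, because $(M_1,M_2)$ being a fixed point of $W$ means $W_1(M_1,M_2)=M_1$ and $W_2(M_1,M_2)=M_2$. Hence the fixed points of $F$ are exactly the vectors $\tfrac1L(M_1\lambda_1,M_2\lambda_2,M_1\lambda_3,M_2\lambda_4,\dots)$ as $(M_1,M_2)$ ranges over $\mathrm{Fix}(W)$; this is precisely the content of the specialization in Remark \ref{ob}.

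Next I would substitute each fixed point of $W$ from Lemma \ref{lfp} into this formula. The diagonal fixed points $p^*_i=(x_i,x_i)$ yield $P_i=\tfrac{x_i}{L}(\lambda_1,\lambda_2,\lambda_3,\dots)$ for $i=1,2,3$, since both even and odd blocks get the common factor $x_i/L$. The off-diagonal fixed points come in the symmetric pairs $p_1=(t_1,t_2)$, $p_2=(t_2,t_1)$, $p_3=(t_3,t_4)$, $p_4=(t_4,t_3)$; plugging $(M_1,M_2)=(t_1,t_2)$ gives odd coordinates scaled by $t_1/L$ and even coordinates scaled by $t_2/L$, i.e. $P_4=\tfrac1L(t_1\lambda_1,t_2\lambda_2,t_1\lambda_3,\dots)$, and analogously $P_5,P_6,P_7$ from $p_2,p_3,p_4$. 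Finally, since the correspondence $(M_1,M_2)\mapsto \tfrac1L(M_1\lambda_1,M_2\lambda_2,\dots)$ is injective (the $\lambda_i$ are strictly positive), the seven cases of $\mathrm{Fix}(W)$ in Lemma \ref{lfp}, indexed by the regions $A_{i,j}$, map bijectively onto the seven cases asserted for $\mathrm{Fix}(F)$, which completes the proof.

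I do not anticipate a serious obstacle: the only things to be careful about are checking that the summed equations really close up into the operator $W$ (this uses $\|\lambda\|_{\mathrm{odd}}=\|\lambda\|_{\mathrm{even}}=L$, which is the hypothesis $L_1=L_2=L$ together with the definition of the $L_i$ in Remark \ref{ob}), that one may legitimately interchange the (convergent, positive-term) infinite sums, and that the fixed-point relation for $W$ is used in the converse direction to collapse $W_1(M_1,M_2)$ back to $M_1$. Everything else is bookkeeping already recorded in Lemma \ref{lfp} and Remark \ref{ob}.
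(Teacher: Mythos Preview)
Your proposal is correct and is exactly the natural reduction argument: pass from $\mathrm{Fix}(F)$ to $\mathrm{Fix}(W)$ via the odd/even sums $M_1,M_2$, invoke Lemma~\ref{lfp}, and then read off each $P_k$ from the corresponding $p_k^*$ or $p_k$ using the formula in Remark~\ref{ob}. The paper itself does not supply a proof of this theorem---it is quoted from \cite{O}---so there is no alternative argument to compare against; your write-up simply fills in the bijection $\mathrm{Fix}(W)\leftrightarrow\mathrm{Fix}(F)$ that the paper's Lemma~\ref{Le} and Remark~\ref{ob} already set up.

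One small point of presentation: the identification $L_i=\sum_j\lambda_{2j-1+\,(i-1)}$ is not stated explicitly in this paper (it lives in \cite{O}), so when you write ``the definition of the $L_i$ in Remark~\ref{ob}'' you are inferring it from the shape of formula~(\ref{xab}) rather than citing a definition. That inference is correct, but in a self-contained write-up you would want to state it as a standing convention rather than point to the remark.
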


\subsection{Invariant sets}
Denote
$$M_{-}=\{(x,y)\in \mathbb R^2_+: x<y\},$$
$$M_0=\{(x,y)\in \mathbb R^2_+: x=y\},$$
$$M_+=\{(x,y)\in \mathbb R^2_+: x>y\}.$$
\begin{lemma}\cite{O} If $L_1=L_2=L$ then the sets $M_\epsilon$, $\epsilon=-,0,+$ are invariant with respect to operator $W$, i.e., $W(M_\epsilon)\subset M_\epsilon$.
\end{lemma}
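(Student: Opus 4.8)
The plan is to show directly that each of the three regions is mapped into itself by $W$. Write $z=(x,y)$ and $z'=W(z)=(x',y')$ as in (\ref{op}) with $L_1=L_2=L$. The key observation is that the two coordinate formulas share the same positive denominator $D=(1+\theta+x+y)^2$, so
\[
x'-y'=\frac{L}{D}\left((1+x)^2-(1+y)^2\right)=\frac{L}{D}\,(x-y)(2+x+y).
\]
Since $L>0$, $D>0$, and $2+x+y>0$ on $\mathbb{R}^2_+$, the sign of $x'-y'$ equals the sign of $x-y$. Hence $x<y\Rightarrow x'<y'$, $x=y\Rightarrow x'=y'$, and $x>y\Rightarrow x'>y'$, which is precisely $W(M_\epsilon)\subset M_\epsilon$ for $\epsilon=-,0,+$.

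The only remaining point to check is that $W$ actually maps $\mathbb{R}^2_+$ into $\mathbb{R}^2_+$, i.e. that $x'>0$ and $y'>0$, so that the statement "$W(M_\epsilon)\subset M_\epsilon$" makes sense with $M_\epsilon$ defined inside $\mathbb{R}^2_+$. This is immediate: for $x,y>0$ the numerators $(1+x)^2,(1+y)^2$ and the denominator are strictly positive, and $L>0$, so $x',y'>0$. (For $M_0$ one may also note that on $x=y$ the operator restricts to the one-dimensional map $x'=L\bigl((1+x)/(1+\theta+2x)\bigr)^2$, which visibly preserves positivity.)

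There is essentially no obstacle here; the whole content is the factorization of $(1+x)^2-(1+y)^2$, after which the claim is a one-line sign argument. The only mild care needed is to record that the common denominator is genuinely common — this is exactly the feature of (\ref{op}) that makes the reduction to the diagonal and its two sides possible, and it is worth stating explicitly since the subsequent analysis of trajectories will repeatedly use that $M_-$, $M_0$, $M_+$ are invariant and that $W$ therefore splits into three autonomous pieces.
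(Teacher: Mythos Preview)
Your argument is correct: the factorization $x'-y'=\dfrac{L(2+x+y)}{(1+\theta+x+y)^2}(x-y)$ immediately gives that $\mathrm{sign}(x'-y')=\mathrm{sign}(x-y)$, and positivity of $x',y'$ is clear. There is nothing to compare against, since the paper does not prove this lemma but merely cites it from \cite{O}; your one-line sign computation is the natural proof and is entirely in the spirit of the paper's later difference-of-squares manipulation in the proof of Lemma~\ref{se}.
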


For each initial point $v^{(0)}=(x_0, y_0)\in\mathbb R^2_+$ consider its trajectory under operator (\ref{op}):
\begin{equation}\label{tr}
	v^{(n)}=(x_n, y_n)=W^n(v^{(0)}), \ \ n\geq 1.
\end{equation}
Since $M_\epsilon$ is an invariant, it suffices to study trajectories on each such set separately.

\subsection{Dynamics on the invariant set $M_0$}

Reduce the operator $W$ defined by (\ref{op}) on the invariant set $M_0$, then we get
\begin{equation}\label{f}
x'=f(x):=L\left(\frac{1+x}{1+\theta+2x}\right)^2.
\end{equation}
Denote
$$\hat L_1=\frac{2\theta^2+76\theta-142-(2\theta-34)\sqrt{\theta^2-18\theta+17}}{16},$$

$$\hat L_2=\frac{2\theta^2+76\theta-142+(2\theta-34)\sqrt{\theta^2-18\theta+17}}{16}.$$
By \cite[Section 4.1]{OR} and Lemma \ref{lfp} we know that
\begin{itemize}
\item[1)] If $\theta\in(0, 17]$, $L>0$ or $\theta>17$, $L\notin(\hat L_1, \hat L_2)$, then $f$ (defined in (\ref{f})) has unique fixed point, denoted by $x_1$;

\item[2)] If $\theta>17$ and $L=\hat L_1$ or $L=\hat L_2$ then $f$ has two fixed points, denoted by $x_1$, $x_2$ with $x_1<x_2$;

\item[3)] If $\theta>17$, $L\in(\hat L_1, \hat L_2)$ then $f$ has three fixed points $x_i$, $i=1, 2, 3$, with $x_1<x_2<x_3$.
\end{itemize}
 If $\theta=1$ then $f(x)=\frac{L}{4}$ therefore we consider the case $\theta\ne 1$.
By Theorem \ref{tar} for fixed points of function (\ref{f}) we have
\begin{lemma}\label{f5} The types of fixed points are as follows
\begin{itemize}
\item[1)] The unique fixed point
$$x_1= \left\{\begin{array}{lll}
	attracting, \ \ \mbox{if} \ \ \theta>17, L\notin(\hat L_1,\hat L_2),\\[2mm]
saddle, \ \ \mbox{if} \ \ \theta=17, L=108,\\[2mm]
attarcting, \ \ \mbox{if} \ \ \theta=17, L\neq108 \ \ \mbox{or} \ \ \theta\in(0,1)\cup(1,17)\\[2mm]
\end{array}\right.$$

\item[2)] If $\theta>17$ and $L=\hat L_1$  (resp. $L=\hat L_2$) then the function $f$ has two fixed points $x_1<x_2$ and $x_1$ is saddle and $x_2$ is attracting (resp. $x_1$ is attracting and $x_2$ is saddle).

\item[3)] If $\theta>17,L\in(\hat L_1,\hat L_2)$ then $f$ has three fixed points with $x_1<x_2<x_3$. Moreover, $x_1$ and $x_3$ are attracting and $x_2$ is repelling.
\end{itemize}
\end{lemma}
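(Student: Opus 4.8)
The plan is to analyze the one–dimensional map $f(x)=L\left(\frac{1+x}{1+\theta+2x}\right)^2$ on $(0,\infty)$ by computing $f'$ and evaluating it at each fixed point, then invoke Theorem \ref{tar}. First I would compute
$$f'(x)=\frac{2L(1+x)}{1+\theta+2x}\cdot\frac{d}{dx}\!\left(\frac{1+x}{1+\theta+2x}\right)=\frac{2L(1+x)(\theta-1)}{(1+\theta+2x)^3},$$
using the quotient rule (this is where the hypothesis $\theta\ne1$ matters: for $\theta=1$ the map is constant). Since a fixed point satisfies $L(1+x)^2=x(1+\theta+2x)^2$, i.e. $L(1+x)^2/(1+\theta+2x)^2=x$, one can eliminate $L$ and rewrite $f'(x^*)$ at a fixed point as
$$f'(x^*)=\frac{2x^*(\theta-1)}{(1+x^*)(1+\theta+2x^*)}.$$
The type of each $x_i$ is then decided by comparing $|f'(x_i)|$ with $1$; because $\theta-1>0$ whenever $\theta>1$, and the relevant fixed points are positive, $f'(x^*)>0$ there, so the three regimes $f'<1$, $f'=1$, $f'>1$ correspond exactly to attracting, (degenerate/saddle), repelling.

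Next I would handle the three cases of Lemma \ref{lfp} / the itemized list preceding the statement. In case (1) (unique fixed point $x_1$) I would show $|f'(x_1)|<1$ by using the cubic/quartic equation that $u_1$ (equivalently $x_1$) satisfies together with the explicit boundary thresholds; the only exceptional parameter is $\theta=17,\ L=108$, where a direct substitution gives $f'(x_1)=-1$ — wait, $f'>0$ here, so actually $f'(x_1)=1$, making $x_1$ non-hyperbolic; the paper labels this "saddle," and I would justify that label by looking at the sign of $f''$ (or $f(x)-x$) near $x_1$ to see the one–sided attracting/repelling behaviour. For $\theta=17$, $L\ne108$ (and for $0<\theta<17$, $\theta\ne1$) a monotonicity/continuity argument in $L$, anchored at the computed value at $L=108$, gives $|f'(x_1)|<1$. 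In case (2), at $L=\hat L_1$ or $L=\hat L_2$ the two fixed points arise from a tangency (a double root of $f(x)-x$), so one of them has $f'=1$ (the "saddle" one) and the other, by the graph of $f$ crossing $y=x$ transversally, has $f'<1$; I would determine which is which by noting that at a saddle–node the born pair sits on the side dictated by the sign of $f''$, and $\hat L_1$ versus $\hat L_2$ flips the ordering. In case (3), $\theta>17$, $\hat L_1<L<\hat L_2$, the graph of $f$ crosses $y=x$ three times; since $f$ is increasing with $f(0)=L/(1+\theta)^2>0$ and $f$ is bounded, the outer crossings $x_1,x_3$ are transversal-from-above-then-below type forcing $f'(x_1),f'(x_3)<1$, while the middle crossing forces $f'(x_2)>1$ — the classical "the slope at the middle of three monotone intersections exceeds 1" argument (formally: $f-\mathrm{id}$ changes sign $+\,-\,+\,$ so its derivative at $x_2$ is $\le0$, hence $f'(x_2)\le1$; ruling out equality uses that $L$ is strictly interior to $(\hat L_1,\hat L_2)$, hence no tangency).

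The main obstacle I expect is the bookkeeping at the boundary values $\theta=17,\ L=108$ and $L=\hat L_1,\hat L_2$: proving that $|f'(x_i)|=1$ exactly there and then correctly attaching the word "saddle" (which for a 1D map really means a non-hyperbolic, one-sided attracting/repelling fixed point), rather than mis-diagnosing stability. This requires either a clean second-order expansion of $f(x)-x$ at the relevant point or an explicit algebraic identity tying $\hat L_1,\hat L_2$ (and $x_1=x_2$ there) to the condition $f'=1$; verifying that identity is the one genuinely computational step, and I would set it up by imposing $f(x)=x$ and $f'(x)=1$ simultaneously, eliminating $x$, and checking the resulting equation in $(\theta,L)$ factors through $L=\hat L_1$ or $L=\hat L_2$. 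Everything else — the interior cases — follows from Theorem \ref{tar} once the sign and size of $f'(x_i)$ are pinned down, together with the already-known count of fixed points from Lemma \ref{lfp}.
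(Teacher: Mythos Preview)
Your overall strategy---compute $f'$ at a fixed point via the identity $f(x^*)=x^*$ to obtain
\[
f'(x^*)=\frac{2(\theta-1)x^*}{(1+x^*)(1+\theta+2x^*)},
\]
then compare with $1$ and invoke Theorem~\ref{tar}---is exactly what the paper does. Where the paper differs is in the execution: rather than arguing graphically from sign changes of $f-\mathrm{id}$, it first solves $f'(x)=1$ as a quadratic in $x$, obtaining explicit thresholds
\[
x_1^*=\tfrac{\theta-5-\sqrt{\theta^2-18\theta+17}}{4},\qquad x_2^*=\tfrac{\theta-5+\sqrt{\theta^2-18\theta+17}}{4},
\]
so that $|f'(x)|<1$ iff $x<x_1^*$ or $x>x_2^*$. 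It then locates each fixed point $x_i$ relative to $x_1^*,x_2^*$ by evaluating $g(x)=f(x)-x$ at these thresholds, using the key identity that $\hat L_1,\hat L_2$ are precisely the values of $L$ for which $x_2^*,x_1^*$ (respectively) become fixed points; e.g.\ for $\theta>17$, $L<\hat L_1$ one writes $g(x)=(L-\hat L_2)\bigl(\tfrac{1+x}{1+\theta+2x}\bigr)^2+\hat L_2\bigl(\tfrac{1+x}{1+\theta+2x}\bigr)^2-x$ to read off $g(x_1^*)<0$ directly. This device replaces your vaguer plan of ``using the cubic/quartic equation\dots\ together with the explicit boundary thresholds'' with a clean one-line check in each subcase, and it simultaneously handles the tangency identification $f'=1\Leftrightarrow L\in\{\hat L_1,\hat L_2\}$ that you flagged as the main computational obstacle.

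There is also a genuine slip in your parenthetical argument for case~(3). With $g(0)>0$ and $g(x)\to-\infty$, the sign pattern of $g$ on the four intervals determined by $x_1<x_2<x_3$ is $+,-,+,-$; at $x_2$ the function $g$ passes from negative to positive, so $g'(x_2)\ge 0$, i.e.\ $f'(x_2)\ge 1$, not $\le 1$ as you wrote. Your stated conclusion $f'(x_2)>1$ is correct, but the formal justification you gave proves the opposite inequality. The paper avoids this by instead showing $g(x_1^*)<0$ and $g(x_2^*)>0$, which forces $x_1<x_1^*<x_2<x_2^*<x_3$ and hence the desired trichotomy on $|f'(x_i)|$ without any appeal to $g'$ at the crossings.
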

\begin{proof} To determine type of fixed points we calculate
$|f'(x_i)|$ for each fixed point $x_i$, $i=1,2,3$.
Using the equality
\begin{equation}\label{ff}
	x_i=f(x_i)=L\left(\frac{1+x_i}{1+\theta+2x_i}\right)^2.
\end{equation}
We obtain
$$f'(x_i)=2L\frac{(1+x_i)(\theta -1)}{(1+\theta+2x_i)^3}=\frac{2(\theta -1)x_i}{(1+x_i)(1+\theta+2x_i)}.$$

I. For $\theta\in (0,1]\cup [17, +\infty])$ we denote
$$x_1^{*}=\frac{\theta-5-\sqrt{\theta^2-18\theta+17}}{4}, \ \
x_2^{*}=\frac{\theta-5+\sqrt{\theta^2-18\theta+17}}{4}.$$
Simple computations show that

a) if  $x_i<x_1^{*}$ or $x_i>x_2^{*}$ then $|f'(x_i)|<1$ ,

b) if $x_1^{*}<x_i<x_2^{*}$ then $|f'(x_i)|>1$ ,

c) If $x_i=x_1^{*}$ or $x_i=x_2^{*}$ then $|f'(x_i)|=1$.

II.  If $\theta\in (1, 17)$ then there is unique fixed point $x_1$ and $|f'(x_1)|<1$.

Consider
$$g(x)=f(x)-x.$$

\textbf{1) Unique fixed point.}

a) If $\theta>17$, $L<\hat L_1$ then
$$g(x)=f(x)-x=L\left(\frac{1+x}{1+\theta+2x}\right)^2-x$$
$$=(L-\hat L_2)\left(\frac{1+x}{1+\theta+2x}\right)^2+\hat L_2\left(\frac{1+x}{1+\theta+2x}\right)^2-x.$$
Consequently,
$g(0)>0$ and since
$$\hat L_2\left(\frac{1+x_1^*}{1+\theta+2x_1^*}\right)^2-x_1^*=0$$ we get
$g(x_1^{*})<0$. Therefore,  $x_1<x_1^{*}$ and   $|f'(x_1)|<1$.

b) If $\theta>17$, $L>\hat L_2$ then
$$g(x)=f(x)-x=L\left(\frac{1+x}{1+\theta+2x}\right)^2-x$$ $$=(L-\hat L_1)\left(\frac{1+x}{1+\theta+2x}\right)^2+\hat L_1\left(\frac{1+x}{1+\theta+2x}\right)^2-x.$$
Therefore
$g(L)<0$ and $g(x_2^{*})>0$. Consequently, $x_1>x_2^{*}$ and thus $|f'(x_1)|<1$.

c) If $\theta=17$ then  $\hat L_1=\hat L_2=108$,  $x_1^{*}=x_2^{*}=3$. In both case $L>108$ or $L<108$ we have   $|f'(x_1)|<1$. Moreover, for $L=108$ we have $x_1=3$ and  $f'(3)=1$.

d) For $\theta=1$ we have $f(x)=\frac{L}{4}$.

e) If $0<\theta<1$ then $$|f'(x_1)|=\bigg|\frac{2(\theta -1)x_1}{(1+x_1)(1+\theta+2x_1)}\bigg|=\bigg|\frac{\theta -1}{1+x_1}\bigg|\cdot\frac{2x_1}{1+\theta+2x_1}<1.$$

f) If  $1<\theta<17$ then as mentioned above we have $|f'(x_1)|<1$.

\textbf{2) Two fixed points.}
For
$$L=\hat L_2=\frac{2\theta^2+76\theta-142+(2\theta-34)\sqrt{\theta^2-18\theta+17}}{16}$$ we have
$$x_{1}=\frac{\theta-5-\sqrt{\theta^2-18\theta+17}}{4}, \ \ x_2=\frac{\theta^2-10\theta-23+(\theta-1)\sqrt{\theta^2-18\theta+17}}{32}.$$
In this case it is easy to see that $x_2^{*}<x_2$ and
$$f'(x_1)=1, \ \ |f'(x_2)|<1.$$

For
$$L=\hat L_1=\frac{2\theta^2+76\theta-142-(2\theta-34)\sqrt{\theta^2-18\theta+17}}{16}$$
we have
$$x_{1}=\frac{\theta-5+\sqrt{\theta^2-18\theta+17}}{4}, \ \
x_2=\frac{\theta^2-10\theta-23-(\theta-1)\sqrt{\theta^2-18\theta+17}}{32}.$$
Since $x_1^{*}>x_2$ we get
$$|f'(x_1)|<1, \ \ f'(x_2)=1.$$

\textbf{3) Three fixed points.}
If $\hat L_1<L<\hat L_2$ there are three fixed points: $x_1<x_2<x_3$.
We have

a) $g(0)=\frac{L}{(1+\theta)^2}>0$;

b) $$g(x)=(L-\hat L_2)\left(\frac{1+x}{1+\theta+2x}\right)^2+\hat L_2\left(\frac{1+x}{1+\theta+2x}\right)^2-x.$$
Therefore $g(x_1^{*})<0$.

c) From $$g(x)=(L-\hat L_1)\left(\frac{1+x}{1+\theta+2x}\right)^2+\hat L_1\left(\frac{1+x}{1+\theta+2x}\right)^2-x$$ we get $g(x_2^{*})>0$.

d) $g(L)=L\left(\left(\frac{1+L}{1+\theta+2L}\right)^2-1\right)<0$.

By the above mentioned inequalities we get  $$|f'(x_1)|<1, \ \ |f'(x_2)|>1, \ \ |f'(x_3)|<1.$$
 \end{proof}

\begin{lemma} The operator (\ref{op}), on the invariant set $M_0$, does not have any $p$-periodic point with $p\geq 2$.
\end{lemma}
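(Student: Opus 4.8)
The plan is to show that on $M_0$ the one-dimensional map $f$ from (\ref{f}) has no points of period $\geq 2$, and since every trajectory starting on $M_0$ stays on $M_0$, the two-dimensional operator $W$ inherits this property. The standard tool for ruling out periodic orbits of a one-dimensional map is monotonicity: if $f$ is monotone on an invariant interval, then any periodic point is automatically a fixed point. So the first step is to analyze the monotonicity of $f(x)=L\left(\frac{1+x}{1+\theta+2x}\right)^2$ on $(0,\infty)$.

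First I would compute $f'(x)$ directly. Since $\frac{1+x}{1+\theta+2x}$ has derivative with numerator $(1+\theta+2x)-2(1+x)=\theta-1$, we get $f'(x)=2L\,\frac{(1+x)(\theta-1)}{(1+\theta+2x)^3}$, which is exactly the formula already recorded in the proof of Lemma \ref{f5}. On $x>0$ the factors $(1+x)$ and $(1+\theta+2x)^3$ are strictly positive, so the sign of $f'$ is constant and equals the sign of $\theta-1$. Hence $f$ is strictly increasing on $(0,\infty)$ when $\theta>1$ and strictly decreasing when $0<\theta<1$ (the degenerate case $\theta=1$ gives the constant map $f\equiv L/4$, whose only periodic points are the fixed point $L/4$, so nothing to prove there).

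The case $\theta>1$: $f$ is strictly increasing on $\mathbb R_+$, so $f^2$ is also strictly increasing; an increasing map cannot have a periodic point of minimal period $\geq 2$, because if $x_0<f(x_0)$ then applying the increasing $f$ repeatedly gives a strictly increasing orbit that can never return to $x_0$, and similarly if $x_0>f(x_0)$. Thus every periodic point is a fixed point. The case $0<\theta<1$: here $f$ is strictly decreasing, so $f^2$ is strictly increasing, which already kills periods $\geq 3$ and reduces the problem to ruling out genuine $2$-cycles. A $2$-cycle $\{a,b\}$ with $a\neq b$ would consist of two distinct fixed points of the increasing map $f^2$ lying on opposite sides of the unique fixed point of $f$; to exclude this I would show $f^2$ has only one fixed point, e.g.\ by noting $(f^2)'(x)=f'(f(x))f'(x)>0$ and, more to the point, that $|f'(x)|<1$ for all $x>0$ when $0<\theta<1$. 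Indeed from $f'(x)=\frac{2(\theta-1)x}{(1+x)(1+\theta+2x)}$ (the second form in Lemma \ref{f5}, valid via (\ref{ff}) — or one can bound $|f'|$ directly) one gets $|f'(x)|=\frac{1-\theta}{1+x}\cdot\frac{2x}{1+\theta+2x}<1\cdot 1=1$, so $f$ is a contraction in the relevant sense and $f^2$ is too, forcing a unique fixed point of $f^2$, hence no $2$-cycle.

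The main obstacle is the bound $|f'(x)|<1$ on all of $\mathbb R_+$ in the decreasing case $0<\theta<1$: one must be slightly careful to bound $f'$ globally rather than only at fixed points (the computation in Lemma \ref{f5}(e) was only at $x_1$). This is handled by the two elementary estimates $\frac{1-\theta}{1+x}<1$ (since $0<\theta<1$ and $x>0$) and $\frac{2x}{1+\theta+2x}<1$, which give $|f'(x)|<1$ for every $x>0$; from this a standard contraction-mapping argument shows $f^2$ has exactly one fixed point on $\mathbb R_+$, completing the proof. Finally, lifting back to $W$: any $p$-periodic point $(x,y)$ of $W$ with $x=y$ is a $p$-periodic point of $f$, hence a fixed point, so $W$ has no $p$-periodic points with $p\geq 2$ on $M_0$.
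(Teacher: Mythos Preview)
Your monotonicity argument for $\theta\ge 1$ is correct and cleaner than the paper's computation. The gap is in the case $0<\theta<1$. You correctly record $f'(x)=2L(1+x)(\theta-1)/(1+\theta+2x)^3$, but then switch to the expression $2(\theta-1)x/[(1+x)(1+\theta+2x)]$ to obtain $|f'(x)|<1$ for all $x>0$. That second expression comes from substituting the fixed-point relation $L\bigl(\tfrac{1+x}{1+\theta+2x}\bigr)^2=x$ into the first, so it holds \emph{only at fixed points of $f$}, not globally; and the suggested fallback ``or one can bound $|f'|$ directly'' is false: for instance with $\theta=\tfrac12$, $L=100$ one has $|f'(0)|=2\cdot 100\cdot \tfrac12/(3/2)^3\approx 29.6$. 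Hence the contraction claim collapses and you have not excluded $2$-cycles when $f$ is decreasing.

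The paper treats all $\theta\ne 1$ uniformly by an explicit computation: with $a=(1+\theta)/2$, $b=L/4$ the quotient $\bigl(f(f(x))-x\bigr)/\bigl(f(x)-x\bigr)$ is a quadratic in $x$ whose discriminant $D=-b(a-1)^2(4a^3+3a^2b+6ab+4b^2-b)$ is negative for every $a>\tfrac12$, $a\ne1$, so there is no genuine $2$-cycle; Sharkovskii's theorem then rules out all higher periods. If you wish to salvage your approach for $0<\theta<1$, one workable patch is to use the cycle relations $f(\alpha)=\beta$, $f(\beta)=\alpha$ to obtain
\[
f'(\alpha)f'(\beta)=\frac{4(\theta-1)^2\,\alpha\beta}{(1+\alpha)(1+\beta)(1+\theta+2\alpha)(1+\theta+2\beta)}<1,
\]
so every fixed point of the increasing map $f^2$ (including $x_1$, by Lemma~\ref{f5}) is attracting; for a strictly increasing map this forces uniqueness of the fixed point, hence no $2$-cycle.
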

\begin{proof}
For simplicity we set $b=\frac{L}{4}$, $a=\frac{1+\theta}{2}$
then $$f(x)=b\left(\frac{1+x}{a+x}\right)^2.$$
First consider the case $p=2$ then 2-periodic points different from fixed are solutions to
\begin{equation}\label{2p}
	\frac{f(f(x))-x}{f(x)-x}=0.
\end{equation}	
This equation is equivalent to
$$(a+b)^2x^2+(2a^3+a^2b+4ab+2b^2-b)x+(a^2+b)^2=0.$$
Since  $a> 1/2$ and $a\neq1$  we have
$$D=-b(a-1)^2(4a^3+3a^2b+6ab+4b^2-b)<0.$$
Therefore, the equation (\ref{2p}) does not have solutions.
Now using Sharkovskii's theorem (see \cite[Corollary 1.1]{Romb}) we conclude that $f(x)$ does not have any $p$-periodic point for $p\geq2$.
\end{proof}
The following theorem describes all limit points on $M_0$.
\begin{thm} The following assertions hold
\begin{itemize}
	\item[1)] If $\theta\in(0, 17]$, $L>0$ or $\theta>17$, $L\notin(\hat L_1, \hat L_2)$ then for any $x\in( 0,+\infty)$ the following equality holds
	$$\lim_{n\to\infty}f^n(x)= x_1.$$
	
	\item[2)] If $\theta>17$ and $L=\hat L_1$  (resp. $L=\hat L_2$) then
	$$\lim_{n\to\infty}f^n(x)=\left\{\begin{array}{ll}
		x_1, \ \ \mbox{if} \ \ x\in (0, x_1]\\[2mm]
		x_2, \ \ \mbox{if} \ \ x\in (x_1, +\infty)
		\end{array}\right.,$$
 $$	\left({\rm resp.} \ \ \lim_{n\to\infty}f^n(x)=\left\{\begin{array}{ll}
			x_1, \ \ \mbox{if} \ \ x\in (0, x_2)\\[2mm]
			x_2, \ \ \mbox{if} \ \ x\in [x_2, +\infty)\end{array}\right. \right)$$
	
	\item[3)] If $\theta>17,L\in(\hat L_1,\hat L_2)$ then 	$$\lim_{n\to\infty}f^n(x)=\left\{\begin{array}{lll}
		x_1, \ \ \mbox{if} \ \ x\in (0, x_2)\\[2mm]
		x_2, \ \ \mbox{if} \ \ x=x_2\\[2mm]
		x_3, \ \ \mbox{if} \ \ x\in (x_2, +\infty)
	\end{array}\right..$$
\end{itemize}
%
%
%
%
%
%
%
%
%
%
%
%
%
%
%
\end{thm}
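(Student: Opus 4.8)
The plan is to analyze the one-dimensional map $f(x)=L\left(\frac{1+x}{1+\theta+2x}\right)^2$ on $(0,+\infty)$ using the standard toolkit for monotone interval maps, exploiting that $f$ is increasing on $(0,+\infty)$ when $\theta>1$ and decreasing when $\theta<1$ (indeed $f'(x)=\frac{2(\theta-1)x}{(1+x)(1+\theta+2x)}$ has the sign of $\theta-1$, vanishing only at $x=0$). Throughout, I use $g(x)=f(x)-x$, whose zeros on $(0,+\infty)$ are exactly the fixed points enumerated in Lemma \ref{f5}, and I note $g(0)=\frac{L}{(1+\theta)^2}>0$ while $g(x)<0$ for $x$ large (since $f$ is bounded: $f(x)\to \frac{L}{4}$). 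The key structural fact is that for a monotone increasing continuous map, every orbit is eventually monotone and bounded, hence convergent, and the limit must be a fixed point; the sign of $g$ on each interval between consecutive fixed points determines whether orbits there increase or decrease, i.e. which fixed point they approach.

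First I would treat case 1). When $\theta>1$, $f$ is increasing with the unique fixed point $x_1$; since $g(0)>0$ and $g(+\infty)<0$ with no other zero, we have $g>0$ on $(0,x_1)$ and $g<0$ on $(x_1,+\infty)$. Then for $x<x_1$ the orbit $f^n(x)$ is increasing and bounded above by $x_1$ (invariance of $(0,x_1]$ follows from monotonicity of $f$ and $f(x_1)=x_1$), so it converges to a fixed point in $(0,x_1]$, which must be $x_1$; symmetrically for $x>x_1$ the orbit decreases to $x_1$; and $f(x_1)=x_1$. When $0<\theta<1$, $f$ is decreasing, so $f^2$ is increasing; the previous Lemma rules out $2$-periodic points, so $f^2$ has the same unique fixed point $x_1$, and the same monotone-convergence argument applied to $f^2$ gives $f^{2n}(x)\to x_1$, hence $f^n(x)\to x_1$. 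The case $\theta=1$ is immediate since $f\equiv L/4$ so $f^n(x)=L/4=x_1$ for $n\ge 1$. The boundary subcase $\theta=17$, $L=108$ (where $x_1$ is a degenerate "saddle" with $f'(x_1)=1$) still has $x_1$ as the only fixed point, so the same global monotone argument applies even though Theorem \ref{tar} does not directly classify it.

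For case 2), say $L=\hat L_1$: here $\theta>17$ so $f$ is increasing, and there are exactly two fixed points $x_1<x_2$, with $x_1$ attracting and $x_2$ a one-sided "saddle" ($f'(x_2)=1$). The sign analysis of $g$: $g(0)>0$ forces $g>0$ on $(0,x_1)$; between $x_1$ and $x_2$ I must determine the sign, and from the proof of Lemma \ref{f5}(2) (where $x_2$ arises as a double root of $g$ shifted appropriately, i.e. $g$ is tangent to zero at $x_2$) one gets $g<0$ on $(x_1,x_2)$; and $g<0$ on $(x_2,+\infty)$ as well since $g(+\infty)<0$ and $x_2$ is the largest zero. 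Hence orbits starting in $(0,x_1]$ increase to $x_1$; orbits in $(x_1,x_2)$ decrease, bounded below by $x_1$, but cannot converge to $x_1$ from above while staying $>x_1$ unless... actually they decrease toward $x_1$ only if there is no fixed point in between — wait, $g<0$ on $(x_1,x_2)$ means $f(x)<x$ there, so orbits decrease; being bounded below by $x_1$ they converge to a fixed point in $[x_1,x_2)$, namely $x_1$. That contradicts the claimed limit $x_2$ on $(x_1,+\infty)$, so I must recheck the sign of $g$ on $(x_1,x_2)$: the correct reading of Lemma \ref{f5}(2) is that $x_1<x_1^*$ (attracting) and $x_2$ is the tangency, with $g>0$ on $(x_1,x_2)$ and $g<0$ on $(x_2,+\infty)$; then orbits in $(x_1,x_2)$ increase to $x_2$, orbits in $(x_2,+\infty)$ decrease to $x_2$, and orbits in $(0,x_1]$ go to $x_1$, matching the theorem. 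The symmetric statement for $L=\hat L_2$ (roles/order reversed) is obtained identically.

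For case 3), $\theta>17$ and $L\in(\hat L_1,\hat L_2)$: $f$ is increasing with three fixed points $x_1<x_2<x_3$, with $x_1,x_3$ attracting and $x_2$ repelling. The sign of $g$ is read off from parts a)–d) of the proof of Lemma \ref{f5}(3): $g>0$ on $(0,x_1)$ and on $(x_2,x_3)$, and $g<0$ on $(x_1,x_2)$ and on $(x_3,+\infty)$. Then: $(0,x_1]$ and $(x_2,x_3)$-orbits increase, $[x_1,x_2)$ and $(x_3,+\infty)$-orbits... careful: on $(x_1,x_2)$, $g<0$ so orbits decrease toward $x_1$; on $(x_2,x_3)$, $g>0$ so orbits increase toward $x_3$; on $(x_3,+\infty)$, $g<0$ so orbits decrease toward $x_3$; on $(0,x_1)$, $g>0$ so orbits increase toward $x_1$; and $x=x_2$ is fixed. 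Each such interval is invariant (monotonicity of $f$ plus the endpoints being fixed), and each bounded monotone orbit converges to the unique fixed point it is "trapped" toward. This yields exactly the piecewise limit in the statement. The main obstacle I anticipate is pinning down the sign of $g$ on each subinterval correctly and cleanly — this is where one must use the decomposition $g(x)=(L-\hat L_j)\left(\frac{1+x}{1+\theta+2x}\right)^2+\hat L_j\left(\frac{1+x}{1+\theta+2x}\right)^2-x$ together with $g(0)>0$, $g(L)<0$, and the tangency/sign values of $g$ at $x_1^*$ and $x_2^*$ already computed in Lemma \ref{f5}; once the sign pattern of $g$ is fixed, the convergence conclusions are routine from monotonicity and boundedness, with the $\theta<1$ case handled by passing to $f^2$ as above.
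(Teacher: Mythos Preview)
Your approach is exactly the paper's: use monotonicity of $f$ (increasing for $\theta>1$), partition $(0,+\infty)$ by the fixed points, and conclude from the sign of $g=f-\mathrm{id}$ on each subinterval that every orbit is monotone and bounded, hence converges to the adjacent fixed point in the appropriate direction. The paper in fact only writes out case 3) and says cases 1) and 2) are simpler; your explicit treatment of $0<\theta<1$ in case 1) by passing to the increasing map $f^2$ and invoking the preceding no-$2$-periodic-point lemma is a genuine and necessary addition that the paper glosses over.

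Two points to clean up. First, the formula $f'(x)=\frac{2(\theta-1)x}{(1+x)(1+\theta+2x)}$ holds only at fixed points; the actual derivative is $f'(x)=\frac{2L(\theta-1)(1+x)}{(1+\theta+2x)^3}$, though your sign conclusion on $(0,+\infty)$ is unaffected. Second, your case 2) bookkeeping is internally inconsistent. For $L=\hat L_1$ the theorem requires the basin of $x_1$ to be exactly $(0,x_1]$, which forces $g>0$ on both $(0,x_1)$ and $(x_1,x_2)$ and $g<0$ on $(x_2,+\infty)$; that sign pattern means $g$ does \emph{not} change sign at $x_1$ and \emph{does} change sign at $x_2$, so the tangency (saddle, $f'=1$) is at $x_1$, not at $x_2$ as you end up writing. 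Your self-correction reached the right sign pattern but kept the wrong label, which is why the sentence ``$x_1<x_1^*$ (attracting) and $x_2$ is the tangency, with $g>0$ on $(x_1,x_2)$ and $g<0$ on $(x_2,+\infty)$'' contradicts itself. Once you swap the roles --- $x_1$ is the saddle when $L=\hat L_1$, and $x_2$ is the saddle when $L=\hat L_2$, in agreement with the \emph{statement} of Lemma~\ref{f5} --- the monotone-convergence argument goes through without further change.
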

\begin{proof} Let us prove part 3), i.e., the case when the function $f$ has three fixed points $x_i$, $i=1,2,3$. This proof is more simple for the parts 1) and 2).
	
3) For $\theta>17,L\in(\hat L_1,\hat L_2)$ we have $f'(x)>0$, i.e., $f$ is an increasing function. In this case we have three fixed points $x_1<x_2<x_3$. Moreover, by Lemma \ref{f5} we know that $x_2$ is repeller and the points $x_1, x_3$ are attractive.
We shall take arbitrary $x^{(0)}>0$ and prove that $x^{(n)} =f(x^{(n-1)})$, $n\geq 1$ converges as $n\to\infty$. Consider the following partition
$$(0,+\infty) = (0, x_1)\cup\{x_1\}\cup(x_1, x_2)\cup \{x_2\}\cup(x_2, x_3)\cup \{x_3\}\cup(x_3,+\infty).$$ Since $f$ is an increasing function, for any  $x\in (0, x_1)$ we have $x < f(x) < x_1$. From the last inequalities we get  $x<f(x) < f^2(x) < f(x_1)=x_1$ and iterating we get $f^{n-1}(x)<f^{n}(x)<x_1$, which for any $x^{(0)}\in (0, x_1)$ gives $x^{(n-1)}<x^{(n)}<x_1$, i.e., $x^{(n)}$ converges and its limit is a fixed point of $f$, since $f$ has unique fixed point $x_1$ in $(0, x_1]$ we conclude that the limit is $x_1$. For $x\in (x_1, x_2)$
we have $x_2>x >f(x)>x_1$, consequently $x^{(n)} > x^{(n+1)}$, i.e., $x^{(n)}$ converges
and its limit is again $x_1$. Similarly, one can show that if $x^{(0)}>x_2$ then $x^{(n)}\to x_3$ as $n\to \infty$.	
\end{proof}

\subsection{On the invariant set $M_{-}$}
Consider operator $W$ on the invariant set $M_-$.
\subsubsection{The set of limit points of trajectory}
For any initial vector  $v^{(0)}=(x^{(0)}, y^{(0)})\in \mathbb R^2_+$ its trajectory under operator $W$ is defined by (\ref{tr}).

Denote by $\omega(v^{(0)})$ the set of all limit points of the trajectory $\{v^{(n)}\}_{n=0}^\infty$ started at $v^{(0)}$.

Note that if the trajectory converges, then $\omega(v^{(0)})$ consists of a single point. If the trajectory does not converge, then $\omega(v^{(0)})$ is finite (with more than one element) if the trajectory is periodic; otherwise, the set of limit points is an infinite set.

In this subsection we study the set of limit points.
Note that operator (\ref{op}) is well defined at point $v^{(0)}=\ell:=(0, L)$.
Consider trajectory $\ell_n=W^n(\ell)=(l_1^{(n)}, l_2^{(n)})$.

\begin{lemma}\label{ell} Independently on values of parameters $\theta>0$ and $L>0$ the trajectory $\ell_n$ has limit:
	$$\lim_{n\to\infty}\ell_n\to \hat\ell\in {\rm Fix}(W).$$
\end{lemma}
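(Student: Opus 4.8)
The plan is to analyze the specific orbit started at the boundary point $\ell=(0,L)$ directly, rather than appealing to a general convergence result. First I would compute a few iterates to understand the structure. Note $W(\ell)=(x'_1, y'_1)$ with $x'_1 = L\left(\frac{1}{1+\theta+L}\right)^2 > 0$ and $y'_1 = L\left(\frac{1+L}{1+\theta+L}\right)^2$. Since $0 < L$, the first coordinate of $\ell$ is strictly less than the second, so $\ell\in \overline{M_-}$, and because $M_-$ is invariant under $W$ (an earlier lemma), the whole trajectory $\ell_n$ stays in $M_-$ after the first step: $l_1^{(n)} < l_2^{(n)}$ for all $n\geq 1$. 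So the orbit lives on the invariant set $M_-$.

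The key idea is to exploit monotonicity of the one-dimensional quantities. Along $M_-$, I would track the two sequences $l_1^{(n)}$ and $l_2^{(n)}$ separately, or better, track a suitable combination. Observe that both coordinates are governed by maps of the form $u\mapsto L\left(\frac{1+u}{1+\theta+x+y}\right)^2$, and the denominator $1+\theta+x+y$ is common to both. I would first show that the total mass $s_n := l_1^{(n)} + l_2^{(n)}$ (or each coordinate) is eventually monotone: since $\ell=(0,L)$ is an \emph{extreme} choice (first coordinate as small as possible, and $y=L$ is the maximal possible value of the $y$-coordinate of $W$ because $\left(\frac{1+y}{1+\theta+x+y}\right)^2<1$), one expects $l_1^{(n)}$ to increase from $0$ and $l_2^{(n)}$ to decrease from $L$, at least initially, giving bounded monotone sequences. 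Concretely, I would prove by induction that $l_1^{(1)}\le l_1^{(2)}\le\cdots$ and $l_2^{(1)}\ge l_2^{(2)}\ge\cdots$ using the fact that $W$, restricted to the relevant region, is coordinatewise monotone in the right sense (increasing in the "own" variable via the numerator, and the comparison $\ell_1 \le W(\ell)$, $\ell_2 \ge W(\ell)$ componentwise propagates under $W$). Both sequences are bounded (between $0$ and $L$), hence converge; the limit $(a,b)$ is a fixed point of $W$ by continuity. Since $b\ge a$ and all fixed points listed in Lemma~\ref{lfp} lie in $M_0\cup M_-\cup M_+$ with explicit coordinates, the limit $\hat\ell$ is one of them, so $\hat\ell\in\mathrm{Fix}(W)$.

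The main obstacle is establishing the eventual monotonicity rigorously for \emph{all} $\theta>0$, $L>0$ simultaneously, because $W$ is not globally monotone: the common denominator $1+\theta+x+y$ couples the coordinates and can reverse naive comparisons. I would handle this by setting up the correct partial order — the componentwise order twisted so that one compares $(x,y)\preceq(x',y')$ iff $x\le x'$ and $y\ge y'$ — and checking that $W$ preserves $\preceq$ when restricted to a forward-invariant box, e.g. $\{(x,y): 0\le x\le x^*,\ x^*\le y\le L\}$ for an appropriate $x^*$ (one of the $t_i$ or $x_i$). Verifying $\ell_1 \preceq \ell_2$ or $\ell_2 \preceq \ell_1$ (whichever holds) as the seed of the induction requires one explicit comparison $W(\ell)$ versus $W^2(\ell)$, which is a finite computation. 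Once monotonicity in $\preceq$ is in hand, boundedness is automatic from the invariant box, and the Monotone Convergence Theorem finishes the argument. The parameter-independence claimed in the lemma then follows because the box and the monotonicity structure do not depend on which region $A_{i,j}$ the pair $(\theta,L)$ falls in — only the \emph{identity} of the limiting fixed point does.
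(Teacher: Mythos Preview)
Your approach is essentially the same as the paper's: exploit monotonicity of $W$ with respect to the South--East order $(x,y)\preceq_{se}(x',y')\iff x\le x',\ y\ge y'$, seed the induction with $\ell\preceq_{se}W(\ell)$, conclude that $l_1^{(n)}$ is increasing and $l_2^{(n)}$ is decreasing, and invoke bounded monotone convergence plus continuity. One simplification you are missing: the ``main obstacle'' you flag does not exist. The map $W$ is \emph{globally} SE--monotone on $\mathbb R_+^2$ (this is exactly Lemma~\ref{se} later in the paper, and follows from the elementary partial derivatives $\partial_x\frac{1+x}{1+\theta+x+y}>0$, $\partial_y\frac{1+x}{1+\theta+x+y}<0$), so no restriction to an invariant box is needed. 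Moreover the seed comparison is immediate at $\ell_0=(0,L)$ rather than at $\ell_1$ versus $\ell_2$: since $l_1^{(0)}=0$ and $l_2^{(1)}=L\bigl(\tfrac{1+L}{1+\theta+L}\bigr)^2<L=l_2^{(0)}$, one has $\ell_0\prec_{se}\ell_1$ with no computation.
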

\begin{proof} Since function $f(x,y)=L({1+x\over 1+\theta+x+y})^2$ is monotone increasing with respect to $x$ and decreasing with respect to $y$, by formula of (\ref{op}) it is clear that, independently on initial point $v^{(0)}=(x_0, y_0)\in M_-$, its trajectory
 $v^{(n)}=(x_n, y_n), \forall n\geq 1,$
 can be bounded as follows:
 $$x_0>0, y_0>0,$$
 $$0=l_1^{(0)}<x_1<l_2^{(0)}=L, \ \ \ 	0=l_1^{(0)}<y_1<l_2^{(0)}=L$$
 $$l_1^{(1)}=L\left({1\over 1+\theta+L}\right)^2<x_2<L\left({1+L\over 1+\theta+L}\right)^2=l_2^{(1)}, \ \ \ 	l_1^{(1)}<y_2<l_2^{(1)}$$
 iterating we get
\begin{equation}\label{xn}
	l_1^{(n)}<x_{n+1}<l_2^{(n)}, \ \ \ 	l_1^{(n)}<y_{n+1}<l_2^{(n)},
\end{equation}	
where
\begin{equation}\label{lnp}
	\begin{array}{ll}
		l_1^{(n+1)}=L\cdot \left(\dfrac{1+l_1^{(n)}}{1+\theta+l_1^{(n)}+l_2^{(n)}}\right)^2\\[2mm]
		l_2^{(n+1)}=L\cdot \left(\dfrac{1+l_2^{(n)}}{1+\theta+l_1^{(n)}+l_2^{(n)}}\right)^2,
	\end{array}
\end{equation}
with $l_1^{(0)}=0 \ \ l_2^{(0)}= L.$

Again using that function $f(x,y)$ is monotone increasing with respect to $x$ and decreasing with respect to $y$
 one can see that
$$0=l_1^{(0)}<l_1^{(1)}<l_1^{(2)}<\dots <L,$$
$$L=l_2^{(0)}>l_2^{(1)}>l_2^{(2)}>\dots >0.$$

Both sequences are monotonic and bounded, so they have a limit. By taking the limit from both sides of (\ref{lnp}), we can see that the limit point is a fixed point of the operator $W$.
\end{proof}
\begin{pro}\label{limp} The following statements hold:
	\begin{itemize}
		\item[a)] The fixed point $\hat\ell=(\hat\ell_1, \hat\ell_2)\in {\rm Fix}(W)$ mentioned in Lemma \ref{ell} has property that
		$$\hat\ell_1=\min\{{\rm the\, first \, coordinates\, of \, all\, fixed \, points}\}.$$
			$$\hat\ell_2=\max\{{\rm the \, second \, coordinates\, of \, all\, fixed \, points}\}.$$
		\item[b)] If $|{\rm Fix}(W)|=1$, i.e. $W$ has a unique fixed point, $\hat \ell$, then trajectory of any initial point $v^{(0)}$ converges to this fixed point: $\omega(v^{(0)})=\{\hat \ell\}.$
		\item[c)] If $|{\rm Fix}(W)|\geq 2$ then $\hat\ell_1\ne \hat\ell_2$ and  for any initial point $v^{(0)}\in \mathbb R^2_+$ the set of limit points of its trajectory satisfies
		$$\omega(v^{(0)})\subset [\hat\ell_1, \hat\ell_2]^2.$$
	\end{itemize}
\end{pro}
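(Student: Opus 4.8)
The plan is to exploit the monotonicity structure of the coordinate maps $f(x,y)=L\left(\frac{1+x}{1+\theta+x+y}\right)^2$ (increasing in $x$, decreasing in $y$) together with the two special "extremal" trajectories $\ell_n = W^n(0,L)$ and its mirror image under the symmetry $(x,y)\mapsto(y,x)$, which by Lemma \ref{ell} both converge to a fixed point. First I would prove part (a): the limit $\hat\ell$ is a fixed point lying on (or limiting to) the boundary of the region swept out by \emph{all} trajectories, and I claim its first coordinate is the smallest among first coordinates of fixed points. To see this, suppose $q=(q_1,q_2)$ is any fixed point. Starting the trajectory at $(0,L)$, note that $0=l_1^{(0)}\le q_1$ is immediate; I would then show by induction, using that $f$ is increasing in the first argument and decreasing in the second, that $l_1^{(n)}\le q_1$ and $l_2^{(n)}\ge q_2$ for all $n$ — the key point being that if $l_1^{(n)}\le q_1$ and $l_2^{(n)}\ge q_2$ then $l_1^{(n+1)}=f(l_1^{(n)},l_2^{(n)})\le f(q_1,q_2)=q_1$ and likewise $l_2^{(n+1)}\ge q_2$. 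Passing to the limit gives $\hat\ell_1\le q_1$ and $\hat\ell_2\ge q_2$; since $\hat\ell$ is itself a fixed point, these are attained, establishing the min/max characterization.

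For part (b), if $W$ has a unique fixed point then $\hat\ell$ is that point. I would sandwich an arbitrary trajectory between the two extremal trajectories: for any $v^{(0)}=(x_0,y_0)$ with, say, $x_0\le y_0$, I would first note there exists $N$ with $W^N(v^{(0)})$ lying in the rectangle $[l_1^{(1)},l_2^{(1)}]^2$ or more simply observe directly that after one step every trajectory enters the set $\{(x,y): 0<x<L,\ 0<y<L\}$ since $f(x,y)<L$ always and $f(x,y)>0$. Then, comparing coordinatewise with the monotone trajectory from $(0,L)$ (which dominates from "outside": $l_1^{(n)}$ increasing up to $\hat\ell_1$, $l_2^{(n)}$ decreasing down to $\hat\ell_2$) and with the mirror trajectory from $(L,0)$, I would show every limit point of $v^{(n)}$ lies in $[\hat\ell_1,\hat\ell_2]^2$; but when $\hat\ell_1=\hat\ell_2$ this rectangle degenerates to the single point $\hat\ell=(\hat\ell_1,\hat\ell_1)$, hence $\omega(v^{(0)})=\{\hat\ell\}$. (The equality $\hat\ell_1=\hat\ell_2$ in the unique-fixed-point case follows because $p^*_1=(x_1,x_1)\in M_0$ is always a fixed point, so uniqueness forces $\hat\ell=p^*_1$.)

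Part (c) is the converse observation: if $|{\rm Fix}(W)|\ge 2$ then there is a fixed point off the diagonal or at least two diagonal ones, and in either case the extremal bounds from part (a) give $\hat\ell_1<\hat\ell_2$ (if all fixed points were on $M_0$ with $\hat\ell_1=\hat\ell_2$, the min of first coordinates would equal the max of second coordinates, forcing all fixed points to coincide at one diagonal point, contradicting $|{\rm Fix}(W)|\ge2$). The containment $\omega(v^{(0)})\subset[\hat\ell_1,\hat\ell_2]^2$ then follows from the same sandwiching argument as in (b): I would establish, for arbitrary $v^{(0)}$, that $\liminf x_n\ge\hat\ell_1$, $\limsup x_n\le\hat\ell_2$, and the same for $y_n$, by comparing with $\ell_n$ and its mirror using the induction on monotonicity.

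The main obstacle I anticipate is making the coordinatewise comparison fully rigorous for an \emph{arbitrary} starting point rather than one already known to lie between $(0,L)$ and $(L,0)$: one must argue that after finitely many steps any trajectory is trapped in a rectangle to which the monotone-convergence comparison applies, and one must handle carefully the interplay between the two monotonicities (the map is increasing in one variable and decreasing in the other, so it is not a monotone map of $\mathbb{R}^2$ in the product order — one needs the "competitive" order $(x,y)\preceq(x',y')\iff x\le x',\ y\ge y'$, in which $W$ \emph{is} monotone, and verify the extremal trajectories are extremal in that order). Once the correct partial order is in place, the rest is routine monotone-convergence bookkeeping plus the fixed-point facts from Lemma \ref{lfp}.
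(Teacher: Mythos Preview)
Your proposal is correct and follows essentially the same approach as the paper. Both arguments rest on the sandwiching inequality $l_1^{(n)}<x_{n+1}<l_2^{(n)}$, $l_1^{(n)}<y_{n+1}<l_2^{(n)}$ (which the paper already established inside the proof of Lemma~\ref{ell} as equation~(\ref{xn})); the paper simply invokes that inequality and takes limits, whereas you re-derive it by the monotonicity induction you describe. One small simplification: you do not need the mirror trajectory from $(L,0)$ or the competitive order at this stage, because the single trajectory $\ell_n=W^n(0,L)$ already bounds \emph{both} coordinates from both sides, as in~(\ref{xn}); your anticipated ``main obstacle'' therefore dissolves once you note that after one step every trajectory lands in $(0,L)^2$, and from there the two-sided bound by $\ell_n$ applies directly.
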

\begin{proof}
a) For each  $(a,b)\in {\rm Fix}(W)$, taking $v^{(0)}=(a,b)$,  for this initial point we have $x_n=a$ and $y_n=b$ for all $n\geq 1$. For this constant sequence,  taking limit from both sides of (\ref{xn}) as $n\to \infty$ we get that
$$\hat\ell_1\leq a\leq \hat\ell_2, \ \ \hat\ell_1\leq b\leq \hat\ell_2.$$

b) By Lemma \ref{lfp} we know that in case of uniqueness of fixed point we have $\hat\ell_1=\hat\ell_2$, which for any initial point,  by (\ref{xn}) gives that
$$\lim_{n\to\infty}x_n=\lim_{n\to\infty}y_n=\hat\ell_1.$$

c) This is also consequence of inequalities (\ref{xn}), which says that independently, on existence of limits of $x_n$ and $y_n$ their limit points will be between $\hat\ell_1$ and $\hat\ell_2$.
\end{proof}

\subsubsection{Jacobian matrix}
By Lemma \ref{lfp} we know that
the number $n_-(W)$  of fixed points of operator $W$ on the set $M_-$ is as following
$$n_-(W)=\left\{\begin{array}{lll}
	0, \ \ \mbox{if} \ \ (\theta, L)\in A_{1,0}\\[2mm]
	1, \ \ \mbox{if} \ \ (\theta, L)\in \bigcup_{j=1}^3A_{j,2}\\[2mm]
	2, \ \ \mbox{if} \ \ (\theta, L)\in \bigcup_{j=1}^3A_{j,4}
	\end{array}
	\right.$$

If $(u, v)\in {\rm Fix}(W)$ (see Lemma \ref{lfp} for 4 possible coordinates $t_i$, $i=1,2,3,4$) is a fixed point then it satisfies
\begin{equation}\label{fop}
	\begin{array}{ll}
		u=L\cdot \left(\dfrac{1+u}{1+\theta+u+v}\right)^2\\[2mm]
		v=L\cdot\left(\dfrac{1+v}{1+\theta+u+v}\right)^2,
	\end{array}
\end{equation} using this system we simplify Jacobian of the operator $W$ at the fixed point $(u, v)$ as
$$J(u,v)=\left(\begin{matrix} \frac{2u(\theta+v)}{(1+\theta+u+v)(1+u)} & -\frac{2u}{1+\theta+u+v} \\ -\frac{2u}{1+\theta+u+v} & \frac{2v(\theta+u)}{(1+\theta+u+v)(1+v)}\end{matrix}\right).$$

%

Since we have explicit formula for each fixed points of operator $W$, using formula of $J(u,v)$ one can find all  eigenvalues of this matrix. But the they will be in a bulky form. 
Here we give an analysis in case when $(u,v)=p_2$ (see Lemma \ref{lfp}).
Denote
$$t=\frac{1}{2}\left(L-2-2\theta+\sqrt{L(L+4-4\theta)}\right).$$
Then for eigenvalues of $J(p_2)$ we have
$$\mu_1=\frac{\theta+1}{\theta+1+t}+\frac{1}{(\theta+1+t)}\sqrt{(\theta-1)^2+4-\frac{4(\theta-1)^2}{2+t}},$$
$$\mu_2=\frac{\theta+1}{\theta+1+t}-\frac{1}{(\theta+1+t)}\sqrt{(\theta-1)^2+4-\frac{4(\theta-1)^2}{2+t}}.$$
We note that  $t\geq2$.
Simple computations show that

a) If $t=2$ then $\mu_1=1$ and  $|\mu_2|=|\frac{\theta-1}{\theta+3}|<1.$

b) If $t>2$ then

$$0<\mu_1\ \ \mbox{is} \ \ \left\{\begin{array}{ll}
	<1, \ \ \mbox{if} \ \ \theta\leq 5 \ \ \mbox{or} \ \ \theta>5, \, L>4(\theta-1)\\[2mm]
	=1, \ \ \mbox{if} \ \ \theta>5,  L=4(\theta-1)
	\end{array}\right.
	 $$
and	
	$$ |\mu_2|<1, \ \  \forall L\geq 4(\theta-1).$$
%
%
%
%
From well-known theorems about attractive fixed points of non-linear dynamical systems it follows the following
\begin{pro} The following assertions hold
	\begin{itemize}
		\item If $L\geq 4(\theta-1)$ then there is a neighborhood $U_2\subset M_-$ of $p_2$ such that for any $(x^{(0)}, y^{(0)})\in U_2$ we have
	$$\lim_{n\to +\infty}W^n(x^{(0)}, y^{(0)})=p_2.$$

	\item If $L<4(\theta-1)$ then
		$$\lim_{n\to +\infty}W^n(x^{(0)}, y^{(0)})\in M_0.$$
				\end{itemize}
	\end{pro}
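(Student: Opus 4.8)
The plan is to split into the two regimes according to the sign of $L-4(\theta-1)$ and to exploit the eigenvalue analysis of $J(p_2)$ already performed in the excerpt together with the containment and monotonicity facts from Lemma~\ref{ell} and Proposition~\ref{limp}.

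For the first bullet, suppose $L\ge 4(\theta-1)$. Then $p_2\in M_-$ is a genuine fixed point (by the count $n_-(W)$ recorded above, since $L\ge 4(\theta-1)$ places $(\theta,L)$ in one of the $A_{j,2}$ or $A_{j,4}$ regions where $M_-$ contains at least one fixed point, and $p_2$ is that point or one of them). The computation preceding the proposition shows that the eigenvalues $\mu_1,\mu_2$ of $J(p_2)$ satisfy $0<\mu_1\le 1$ and $|\mu_2|<1$, and in fact $\mu_1<1$ strictly except on the boundary curve $\theta>5,\ L=4(\theta-1)$. On the open part $\{L>4(\theta-1)\}$ both eigenvalues are strictly inside the unit disk, so $p_2$ is a hyperbolic attracting fixed point; the Grobman--Hartman/linearization theorem for $C^1$ maps gives a neighborhood $U_2$ (which can be taken inside the open set $M_-$, since $p_2$ lies in the interior of $M_-$) on which $W^n(x^{(0)},y^{(0)})\to p_2$. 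On the boundary line $L=4(\theta-1)$ with $\theta>5$ we have $\mu_1=1$, so a center-manifold argument is needed: reduce to the one-dimensional dynamics on the weak-stable curve through $p_2$ and check, via the second-order Taylor coefficient of $W$ restricted to that curve, that $p_2$ is still (one-sidedly) attracting; combined with the contracting direction $\mu_2$ this again yields a neighborhood $U_2$ with the stated convergence. This center-manifold step is the main obstacle and the only place where a genuinely new computation is required.

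For the second bullet, suppose $L<4(\theta-1)$. Here $\theta>5$ necessarily (for $\theta\le 5$ one has $4(\theta-1)\le\frac{(\theta+3)^2}{4}$-type bounds placing us in $A_{1,0}$), so by Lemma~\ref{lfp} the pair $(\theta,L)$ lies in $A_{1,0}$ and $W$ has the unique fixed point $p_1^*=(x_1,x_1)\in M_0$. By Proposition~\ref{limp}(b), since $|\mathrm{Fix}(W)|=1$, the trajectory of \emph{every} initial point converges to $\hat\ell=p_1^*\in M_0$. This already gives $\lim_{n\to\infty}W^n(x^{(0)},y^{(0)})\in M_0$, so in this regime the statement is immediate from the earlier results and needs no extra work beyond matching the parameter region $L<4(\theta-1)$ against the list defining $A_{1,0}$.

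Thus the only substantive work is the attracting case: for $L>4(\theta-1)$ it is a one-line consequence of the eigenvalue bounds and the linearization theorem, while for the single boundary line $L=4(\theta-1)$, $\theta>5$ one must run a center-manifold reduction to confirm that the $\mu_1=1$ direction is non-degenerately attracting. I would present the hyperbolic case in full and treat the boundary line either by the center-manifold computation or, if the monotonicity of $W$ along $M_-$ can be pushed far enough, by a direct squeezing argument using \eqref{xn} restricted to a small box around $p_2$.
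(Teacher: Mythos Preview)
Your approach is essentially the same as the paper's: the paper gives no detailed proof at all, stating only that the proposition ``follows from well-known theorems about attractive fixed points of non-linear dynamical systems'' immediately after the eigenvalue computation for $J(p_2)$. Your treatment is in fact more careful than the paper's on two points. First, you correctly isolate the non-hyperbolic boundary $L=4(\theta-1)$, $\theta>5$ (where $\mu_1=1$) as requiring a center-manifold reduction; the paper glosses over this entirely. Second, for the regime $L<4(\theta-1)$ you give an explicit and complete argument by observing that this forces $(\theta,L)\in A_{1,0}$ (your verification via $(\theta-5)^2\ge 0$ is correct, though your phrase ``$\theta>5$ necessarily'' should read $\theta>1$; the parenthetical covers the remaining range), so that $|\mathrm{Fix}(W)|=1$ and Proposition~\ref{limp}(b) already gives global convergence to the unique fixed point $p_1^*\in M_0$. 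The paper does not spell this out. One small caveat: the paper's eigenvalue list also records $\mu_1=1$ when $t=2$, i.e.\ when $L=\tfrac{(\theta+3)^2}{4}$; you should check that on this curve $p_2$ either fails to exist as a point of $M_-$ or is already covered by your boundary analysis, so that no second non-hyperbolic case is being silently dropped.
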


\subsubsection{A sufficient condition of convergence to $M_0$}

On the set  $M_-$ we introduce new variables:
$$s=x+y, \ \ t=y-x.$$
Then the operator $W$ can be rewritten as
\begin{equation}\label{opp}
\hat W:\left\{	\begin{array}{ll}
		s'=L \dfrac{(2+s)^2+t^2}{2(1+\theta+s)^2}\\[2mm]
		t'=L \dfrac{2+s}{(1+\theta+s)^2}t,
	\end{array}\right.
\end{equation}
Define
$$\psi(s)=\dfrac{2+s}{(1+\theta+s)^2}.$$
It is easy to see that
$$\max_{s>0}\psi(s)=\Lambda(\theta):=\left\{\begin{array}{ll}
	\dfrac{1}{4(\theta-1)}, \ \ \mbox{if} \ \ \theta>3\\[2mm]
	\dfrac{2}{(\theta+1)^2}, \ \ \mbox{if} \ \ \theta\leq 3
\end{array}	\right.$$
\begin{pro} If parameters $\theta>0$ and $L>0$ such that
	$L\Lambda(\theta)<1$ then for any $z_0=(x_0, y_0)\in \mathbb R_+^2$ the limit set of trajectory $z_{n+1}=W^n(z_n),$ $n\geq 0$ is a subset of $M_0$.
\end{pro}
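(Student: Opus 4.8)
The plan is to pass to the coordinates $s=x+y$, $t=y-x$ and work with the representation $\hat W$ of (\ref{opp}), noting that the defining equation of the invariant set $M_0$ is precisely $t=0$. Hence it is enough to establish two facts: (i) along every trajectory one has $t_n\to 0$; and (ii) the trajectory eventually stays in a compact subset of the open quadrant $\mathbb R^2_+$, so that its (nonempty) set of limit points actually lies in $M_0$ and not on the coordinate axes.

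For (i), the second equation of (\ref{opp}) gives $|t_{n+1}|=L\,\psi(s_n)\,|t_n|$ for all $n\ge 0$, where $s_n=x_n+y_n\ge 0$. Since $\psi(s)\le\Lambda(\theta)$ for every $s\ge 0$ by definition of $\Lambda(\theta)=\max_{s>0}\psi(s)$, the hypothesis $q:=L\Lambda(\theta)<1$ yields $|t_{n+1}|\le q|t_n|$, and therefore $|t_n|\le q^{\,n}|t_0|\to 0$. Equivalently, $\mathrm{dist}(z_n,M_0)=|t_n|/\sqrt2\to 0$.

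For (ii), I would bound $s_n$ from above using the first equation of (\ref{opp}): since $(2+s)^2/(1+\theta+s)^2\le\max\{1,\,4/(1+\theta)^2\}$ for $s\ge 0$ and $t_n^2\le t_0^2$, one obtains $s_{n+1}\le R$ for a constant $R=R(\theta,L,t_0)$, hence $0\le s_n\le M:=\max\{s_0,R\}$ for all $n$. In particular $x_n,y_n\le M$, and returning to (\ref{op}), $x_n=L\big((1+x_{n-1})/(1+\theta+s_{n-1})\big)^2\ge L/(1+\theta+M)^2>0$ for $n\ge 1$, and similarly for $y_n$; thus $z_n\in[\varepsilon,M]^2\subset\mathbb R^2_+$ for $n\ge 1$ with $\varepsilon:=L/(1+\theta+M)^2$. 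By Bolzano–Weierstrass the set of limit points is nonempty, and by (i) every limit point $z^*$ satisfies $x^*=y^*$ with $x^*\ge\varepsilon>0$, that is $z^*\in M_0$.

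No sign restriction on $t_0$ is used, so the argument covers all $z_0\in\mathbb R^2_+$ (for $z_0\in M_0$ the assertion is trivial since $M_0$ is invariant). I do not expect a genuine obstacle: the geometric contraction of the $t$-component does the essential work, and the only step requiring a little care is the elementary a priori bound on $s_n$ together with ruling out limit points on $\{x=0\}\cup\{y=0\}$.
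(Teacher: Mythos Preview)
Your argument is correct and follows essentially the same route as the paper: pass to $(s,t)$-coordinates via (\ref{opp}) and use the contraction $|t_{n+1}|=L\psi(s_n)|t_n|\le (L\Lambda(\theta))|t_n|$ to force $t_n\to 0$. The paper's proof stops there; your step (ii)---the uniform bound on $s_n$ and the resulting lower bound $x_n,y_n\ge L/(1+\theta+M)^2$---is additional care the paper omits, but it does not change the approach, only its completeness.
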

\begin{proof} Let $(s_n,t_n)=\hat W^n(x_0+y_0, y_0-x_0)$. We have to show that $\lim_{n\to \infty}t_n=0$. Consider $z_0\in M_-$ (the case $z_0\in M_0$ is trivial, and the case $z_0\in M_+$ is similar to $M_-$).
	From (\ref{opp}) we get for $n\geq 1$ that
	$$0<t_{n+1}=L \dfrac{2+s_n}{(1+\theta+s_n)^2}t_n=L\psi(s_n)t_n\leq (L\Lambda(\theta))^nt_0\to 0, \ \ n\to \infty.$$
\end{proof}
\subsubsection{Limit points}
In this subsection for each fixed point we study its set of  attraction and set of repulsion. To do this we use the property that $W$ is monotone with respect to a partial order on $\mathbb R^2$ (see methods of \cite{BB} and the references therein).

Recall some definitions (see \cite{BB}). Let $\preceq$ be a partial order on $\mathbb{R}^n$ with non negative cone $P$. For $x, y \in \mathbb{R}^n$ the order interval $[[x, y]]$ is the set of all $z$ such that $x \preceq z \preceq y$. One says $x \prec y$ if $x \preceq y$ and $x \neq y$.
A map $T$ on a subset of $\mathbb{R}^n$ is order preserving  (monotone) if $T(x) \preceq T(y)$ whenever $x \preceq y$, strictly order preserving if $T(x) \prec T(y)$ whenever $x \prec y$.

Let $T: \mathbb R \rightarrow \mathbb R$ be a map with a fixed point $\bar{x}$ and let $R^{\prime}$ be an invariant subset of $R$ that contains $\bar{x}$. We say that $\bar{x}$ is stable (asymptotically stable) relative to $R^{\prime}$ if $\bar{x}$ is a stable (asymptotically stable) fixed point of the restriction of $T$ to $R^{\prime}$.

We will use the next theorem \cite{KM}, which is stated for order preserving maps on $\mathbb{R}^n$ (see also \cite{BB}).

\begin{thm}\label{tK} For a nonempty set $R \subset \mathbb{R}^n$ and $\preceq a$ partial order on $\mathbb{R}^n$, let $T: R \rightarrow R$ be an order preserving map, and let $a, b \in R$ be such that $a \prec b$ and $[[ a, b]] \subset R$. If $a \preceq T(a)$ and $T(b) \preceq b$, then
\begin{itemize}
	\item[(i)] 	$[[a, b]]$ is an invariant set;
\item[(ii)] There exists a fixed point of $T$ in $[[a, b]]$;
\item[(iii)] If $T$ is strongly order preserving, then there exists a fixed point in $[[a, b]]$ which is stable relative to $[[a, b]]$;
\item[(iv)] If there is only one fixed point in $[[a, b]]$, then it is a global attractor, asymptotically stable relative to $[[a, b]]$.
\end{itemize}
\end{thm}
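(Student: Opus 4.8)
The plan is to establish the four parts in order using only the order structure and the two inequalities $a\preceq T(a)$ and $T(b)\preceq b$; throughout I will use, as is customary in this framework, that the cone $P$ of $\preceq$ is closed in $\mathbb R^n$ (so that order-bounded monotone sequences converge and the order squeeze holds) and that $T$ is continuous.

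Part (i) is a one-line squeeze: if $a\preceq z\preceq b$ then monotonicity gives $T(a)\preceq T(z)\preceq T(b)$, and chaining with the hypotheses yields $a\preceq T(a)\preceq T(z)\preceq T(b)\preceq b$, i.e. $T(z)\in[[a,b]]$. For part (ii) I would run the two monotone iterations $a_n:=T^n(a)$ and $b_n:=T^n(b)$: from $a\preceq T(a)$ and monotonicity, induction gives $a_0\preceq a_1\preceq\cdots$ and likewise $b_0\succeq b_1\succeq\cdots$, while $a\preceq b$ gives $a_n\preceq b_n$, so by (i) everything stays in $[[a,b]]$ and both sequences are order-bounded, hence convergent, say $a_n\to\underline x$ and $b_n\to\overline x$ with $\underline x,\overline x\in[[a,b]]$. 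Passing to the limit in $a_{n+1}=T(a_n)$ (continuity of $T$) shows $\underline x,\overline x$ are fixed points. Moreover, for any fixed point $p\in[[a,b]]$ one has $a\preceq p$, hence $a_n=T^n(a)\preceq T^n(p)=p$ and so $\underline x\preceq p$; symmetrically $p\preceq\overline x$. Thus $\underline x$ and $\overline x$ are the least and greatest fixed points in $[[a,b]]$, and every fixed point lies in $[[\underline x,\overline x]]$.

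Part (iv) then follows quickly: if $[[a,b]]$ has a unique fixed point, then $\underline x=\overline x=:p$, and for any $z\in[[a,b]]$ monotonicity gives $a_n\preceq T^n(z)\preceq b_n$ with both ends tending to $p$, so $T^n(z)\to p$ by the order squeeze; hence $p$ attracts all of $[[a,b]]$. For stability relative to $[[a,b]]$ I would localise the same comparison near $p$: the boxes $[[a_n,b_n]]$ are invariant by (i), nested and shrinking to $\{p\}$, and the trajectory of any point of $[[a,b]]$ sufficiently close to $p$ is trapped in $[[a_n,b_n]]$ for a suitable $n$; given $\varepsilon>0$ one picks $n$ with $[[a_n,b_n]]\subset B(p,\varepsilon)$, which yields stability and, together with global attraction, asymptotic stability relative to $[[a,b]]$.

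The real work is in (iii), and this is where I expect the main obstacle. We may assume $\underline x\prec\overline x$, otherwise (iv) applies. A stable fixed point need not be $\underline x$ or $\overline x$ — already on the line the least fixed point of an increasing map can be unstable from above while an intermediate fixed point is the attracting one — so the strong-order-preservation hypothesis has to be exploited to actually \emph{locate} a stable one. The approach I would take is the Dancer--Hess nested-interval scheme: inside the invariant box $[[\underline x,\overline x]]$ use strong order preservation to produce strict sub/super-solutions $\underline x\prec a'\preceq b'\prec\overline x$ (strong monotonicity is precisely what keeps the iterates of a strict sub-solution off the boundary, so that one genuinely gains room), apply (ii) inside $[[a',b']]$ to obtain a smaller ordered pair of fixed points, and iterate; the nested boxes either collapse onto one fixed point possessing a genuine order-interval neighbourhood of attraction or stabilise, and in either case one extracts a fixed point that is stable relative to $[[a,b]]$. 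The two delicate points I anticipate are (1) manufacturing the strict sub/super-solutions $a',b'$ out of strong order preservation, and (2) showing the nested procedure terminates in, or converges to, a fixed point that is genuinely \emph{stable} and not merely the common limit of a single monotone orbit; this is exactly the place where the machinery of monotone maps (Hirsch; Dancer--Hess), as organised in \cite{BB,KM}, does the heavy lifting.
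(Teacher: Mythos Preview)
The paper does not prove this theorem at all: it is quoted verbatim from \cite{KM} (see also \cite{BB}) and used as a black box, so there is no ``paper's own proof'' to compare against. Your sketch is the standard route one finds in the monotone-dynamics literature (Dancer--Hess, Hirsch, Kulenovi\'c--Merino): the squeeze for (i), iterating from the endpoints to get least/greatest fixed points for (ii), the order-sandwich for (iv), and the nested-interval/strong-monotonicity machinery for (iii). That is essentially how the cited references argue, and your identification of (iii) as the place where real work happens, together with the two delicate points you flag, is accurate. Note only that you are silently importing continuity of $T$ and closedness of the cone; these are indeed the standing hypotheses in \cite{KM,BB}, but they are not stated in the theorem as quoted in the present paper, so if you were writing this up you would need to make them explicit.
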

 The following result is a direct consequence of
the Trichotomy Theorem of \cite{DH} (see also \cite{BB}, \cite{KM}), and is helpful for determining the
basins of attraction of the fixed points.

\begin{pro}\label{bas} If the non-negative cone of a partial ordering $\preceq$ is a generalized quadrant in $\mathbb{R}^n$, and if $T$ has no fixed points in $[[u_1, u_2]]$ other than $u_1$ and $u_2$, then the interior of $[[u_1, u_2]]$ is either a subset of the basin of attraction of $u_1$ or a subset of the basin of attraction of $u_2$.
\end{pro}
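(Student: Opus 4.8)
The plan is to obtain the statement as a direct corollary of the Trichotomy Theorem of \cite{DH} (see also \cite{BB}, \cite{KM}) applied to the order-preserving map $T$. First, since the non-negative cone $P$ of $\preceq$ is a generalized quadrant, i.e. $P=\{x\in\mathbb R^n:\ \epsilon_ix_i\ge 0,\ i=1,\dots,n\}$ for some signs $\epsilon_i\in\{-1,1\}$, the linear change of variables $x_i\mapsto\epsilon_ix_i$ turns $\preceq$ into the standard componentwise order; after this reduction $[[u_1,u_2]]$ becomes the compact box $\prod_{i=1}^n[(u_1)_i,(u_2)_i]$, in particular an order-convex set, and we may assume $u_1\prec u_2$ (the two endpoints being distinct fixed points of $T$) and that $[[u_1,u_2]]$ lies in the domain of $T$. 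In this setting the Trichotomy Theorem asserts that exactly one of the following holds: (T1) $\lim_{n\to\infty}T^n(x)=u_1$ for every $x$ in the interior of $[[u_1,u_2]]$; (T2) the same with $u_2$ in place of $u_1$; (T3) $T$ has a fixed point in the interior of $[[u_1,u_2]]$.

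It then remains to verify the hypotheses of that theorem and to discard the unwanted alternative. Invariance of $[[u_1,u_2]]$ under $T$ follows from Theorem \ref{tK}(i) with $a=u_1$, $b=u_2$: the conditions $a\preceq T(a)$ and $T(b)\preceq b$ hold trivially since $u_1$ and $u_2$ are fixed points. By the standing hypothesis of the proposition, $u_1$ and $u_2$ are the only fixed points of $T$ in $[[u_1,u_2]]$, so alternative (T3) is excluded. Hence (T1) or (T2) holds, which is exactly the assertion that the interior of $[[u_1,u_2]]$ is contained in the basin of attraction of $u_1$ or in that of $u_2$.

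The step carrying the real weight is the Trichotomy Theorem itself, which here is only cited. Its delicate point is to rule out a ``watershed'' configuration inside the interior, in which some orbits converge to $u_1$ and others to $u_2$ with no fixed point between them; in \cite{DH} this is resolved, roughly, by extracting from such a pair of orbits — using the monotonicity of $T$ and the order-convexity of the box — a nested family of $T$-invariant order subintervals whose intersection is a nonempty compact invariant set and hence, by an argument of the type of Theorem \ref{tK}(ii), contains a fixed point strictly between $u_1$ and $u_2$, contradicting the hypothesis. Thus the only genuine obstacle lies inside the cited theorem; for the present paper it suffices to invoke it once the generalized-quadrant cone, the invariance of $[[u_1,u_2]]$, and the absence of interior fixed points have been checked as above.
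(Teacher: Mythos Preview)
Your proposal is correct and matches the paper's approach: the paper does not give an independent proof of this proposition but simply records it as a direct consequence of the Trichotomy Theorem of \cite{DH} (with references to \cite{BB}, \cite{KM}). Your write-up is in fact more detailed than the paper's, spelling out the reduction from a generalized quadrant to the standard order, the invariance of $[[u_1,u_2]]$ via Theorem~\ref{tK}(i), and the exclusion of the third alternative; this is all consistent with the intended argument.
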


Consider the North-East ordering (NE) on $\mathbb R^2$ for which the positive cone is the first quadrant, i.e., this partial ordering is defined by $\left(x_1, y_1\right) \preceq_{ne}\left(x_2, y_2\right)$ if $x_1 \leq x_2$ and $y_1 \leq y_2$. The South-East (SE) ordering defined as $\left(x_1, y_1\right) \preceq_{se}\left(x_2, y_2\right)$ if $x_1 \leq x_2$ and $y_1 \geq y_2$.

A map $T$ on a nonempty set $\mathcal{R} \subset \mathbb{R}^2$ which is monotone with respect to the NE ordering is called cooperative and a map monotone with respect to the SE ordering is called competitive.

\begin{lemma}\label{se}
	The operator $W$ defined in (\ref{op}) is  monotone with respect to the SE ordering, i.e., is a competitive map.
\end{lemma}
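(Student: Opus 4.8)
The plan is to verify directly that $W$ reverses the South-East partial order by a differential (Jacobian) sign computation, which is the standard criterion for a $C^1$ map on a convex domain to be monotone. Recall that for the SE ordering, whose positive cone is $P_{se}=\{(a,b): a\ge 0,\ b\le 0\}$, a $C^1$ map $T=(T_1,T_2)$ on a convex set is monotone (competitive) precisely when at every point the Jacobian sends $P_{se}$ into itself, equivalently when
$$\frac{\partial T_1}{\partial x}\ge 0,\quad \frac{\partial T_1}{\partial y}\le 0,\quad \frac{\partial T_2}{\partial x}\le 0,\quad \frac{\partial T_2}{\partial y}\ge 0$$
throughout the domain $\mathbb R^2_+$. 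So the first step is to compute these four partial derivatives of $W$ from (\ref{op}).

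Second, I would carry out the elementary differentiations. Writing $D=1+\theta+x+y$, we have $x'=L(1+x)^2/D^2$, so
$$\frac{\partial x'}{\partial x}=\frac{2L(1+x)}{D^3}\big(D-(1+x)\big)=\frac{2L(1+x)(\theta+y)}{D^3}>0,$$
$$\frac{\partial x'}{\partial y}=-\frac{2L(1+x)^2}{D^3}<0,$$
and symmetrically
$$\frac{\partial y'}{\partial y}=\frac{2L(1+y)(\theta+x)}{D^3}>0,\qquad \frac{\partial y'}{\partial x}=-\frac{2L(1+y)^2}{D^3}<0.$$
Since $x,y>0$ and $\theta>0$ (hence $1+x>0$, $\theta+y>0$, $D>0$), all four sign conditions hold strictly on $\mathbb R^2_+$. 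This already matches the form of $J(u,v)$ displayed earlier in the excerpt, so the computation is consistent with what the paper has recorded.

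Third, I would pass from the Jacobian sign condition to the order-preserving statement. Given $(x_1,y_1)\preceq_{se}(x_2,y_2)$, i.e. $x_1\le x_2$ and $y_1\ge y_2$, connect the two points by the straight segment $\gamma(r)=(1-r)(x_1,y_1)+r(x_2,y_2)$, $r\in[0,1]$, which stays in the convex set $\mathbb R^2_+$. Then $\frac{d}{dr}x'(\gamma(r))=\frac{\partial x'}{\partial x}(x_2-x_1)+\frac{\partial x'}{\partial y}(y_2-y_1)\ge 0$ because $x_2-x_1\ge 0$ multiplies a nonnegative derivative and $y_2-y_1\le 0$ multiplies a nonpositive derivative; integrating over $[0,1]$ gives $x'(x_1,y_1)\le x'(x_2,y_2)$. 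Likewise $\frac{d}{dr}y'(\gamma(r))\le 0$, giving $y'(x_1,y_1)\ge y'(x_2,y_2)$. Hence $W(x_1,y_1)\preceq_{se}W(x_2,y_2)$, which is exactly competitiveness; moreover if the initial inequality is strict the derivative estimates are strict (the relevant factors never vanish on $\mathbb R^2_+$), so $W$ is in fact strongly order preserving with respect to $\preceq_{se}$.

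There is no real obstacle here; the only point requiring a line of care is that the domain $\mathbb R^2_+$ is convex so the mean-value / line-integral argument applies, and that the boundary behaviour is irrelevant since we work on the open positive orthant (and $W$ extends continuously to points like $(0,L)$ used elsewhere, preserving the inequalities by continuity if needed). I would therefore present the proof as: (1) display the four partial derivatives with their signs, (2) invoke convexity of $\mathbb R^2_+$ and integrate along segments to conclude $W(x_1,y_1)\preceq_{se}W(x_2,y_2)$ whenever $(x_1,y_1)\preceq_{se}(x_2,y_2)$, and (3) note strictness to record that $W$ is strongly competitive, which is what the subsequent applications of Theorem \ref{tK} and Proposition \ref{bas} will need.
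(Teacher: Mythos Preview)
Your proof is correct but proceeds differently from the paper. The paper does not compute partial derivatives and integrate along segments; instead it factors the difference of squares directly to obtain the explicit algebraic identities
\[
x_1'-x_2'=A\big[(\theta+y_2)(x_1-x_2)+(1+x_2)(y_2-y_1)\big],\qquad
y_1'-y_2'=B\big[(1+y_2)(x_2-x_1)+(\theta+x_2)(y_1-y_2)\big],
\]
with $A,B>0$, from which SE-monotonicity is read off at a glance. Your Jacobian argument is the standard differential criterion and is equally valid on the convex domain $\mathbb R^2_+$; its advantage is that it is routine and immediately yields the strict/strong form you note. The paper's route, however, has a payoff beyond this lemma: the displayed identities are labeled (\ref{XY}) and are reused verbatim in the proof of Proposition~\ref{hamma} (specializing $(x_2,y_2)=(a,b)$ to a fixed point) to locate $W(x,y)$ relative to $(a,b)$ under various NE/SE hypotheses. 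So if you adopt your approach in the write-up, you would still need to record those factorizations separately for the subsequent proposition.
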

\begin{proof}
	By (\ref{op})	we have (for $L_1=L_2=L$)
	$$x_1'-x_2'=L\left[\left({1+x_1\over 1+\theta+x_1+y_1}\right)+
	\left({1+x_2\over 1+\theta+x_2+y_2}\right)\right]\times$$ $$\left[\left({1+x_1\over 1+\theta+x_1+y_1}\right)-
	\left({1+x_2\over 1+\theta+x_2+y_2}\right)\right]$$
	$$=A\cdot [(\theta+y_2)(x_1-x_2)+(1+x_2)(y_2-y_1)],$$
	where $A=A(x_1,x_2,y_1,y_2,\theta)>0$.  Such an equality also can be written for $y_1'-y_2'$. Thus
	\begin{equation}\label{XY}\begin{array}{ll}
			x_1'-x_2'=A\cdot [(\theta+y_2)(x_1-x_2)+(1+x_2)(y_2-y_1)],\\[2mm]
			y_1'-y_2'=B\cdot [(1+y_2)(x_2-x_1)+(\theta+x_2)(y_1-y_2)],
		\end{array}
	\end{equation}
	where $B=B(x_1,x_2,y_1,y_2,\theta)>0$.
	
	Take $(x_1, y_1)\preceq_{se} (x_2, y_2)\in \mathbb R^2$ then $x_1\leq x_2$ and $y_1\geq y_2$. By the last equalities from  (\ref{XY}) one can see that $x_1'\leq x_2'$ and $y_1'\geq y_2'$, i.e.,
	$$(x_1', y_1')=W(x_1, y_1)\preceq_{se} (x_2', y_2')=W(x_2, y_2).$$
	This completes the proof.	
\end{proof}
	The operator (\ref{op}) does not exhibit monotonicity with respect to the NE ordering. The following proposition gives relations between SE and NE.
\begin{pro}\label{hamma}
	Let $(a,b)\in {\rm Fix}(W)$ and $(x,y)\in \mathbb R^2_+$.
	\begin{itemize}
		\item[1.] If $(x,y)\preceq_{se} (a,b)$ then $W(x,y)\preceq_{se} (a,b)$.
		\item[2.] If $(a,b)\preceq_{se} (x,y)$ then $(a,b)\preceq_{se} W(x,y)$.
		\item[3.] Assume $\theta\geq 1$.
		\begin{itemize}
			\item[3a.]	If $$(x,y)\in \left\{(x,y)\in \mathbb R^2_+: \, (x,y)\preceq_{ne} (a,b), \, b-{\theta+b\over 1+a}(a-x)\leq y \leq b-{1+b\over \theta+a}(a-x)\right\}$$ then
			$W(x,y)\preceq_{ne} (a,b)$.
			\item[3b.] If
			$$(x,y)\in \left\{(x,y)\in \mathbb R^2_+: \, (x,y)\preceq_{ne} (a,b), \, y\leq b-{\theta+b\over 1+a}(a-x)\right\}$$ then
			$(a,b)\preceq_{se} W(x,y)$.
			\item[3c.]  If
			$$(x,y)\in \left\{(x,y)\in \mathbb R^2_+: \, (x,y)\preceq_{ne} (a,b), \, b-{1+b\over \theta+a}(a-x)\leq y\right\}$$ then
			$W(x,y)\preceq_{se} (a,b)$.
			\item[3a'.]	If $$(x,y)\in \left\{(x,y)\in \mathbb R^2_+: \, (a,b)\preceq_{ne} (x,y), \, b+{1+b\over \theta+a}(x-a)\leq y \leq b+{\theta+b\over 1+a}(x-a)\right\}$$ then
			$(a,b) \preceq_{ne} W(x,y)$.
			\item[3b'.] If
			$$(x,y)\in \left\{(x,y)\in \mathbb R^2_+: \, (a,b)\preceq_{ne} (x,y), \, y\leq b+{1+b\over \theta+a}(x-a)\right\}$$ then
			$(a,b)\preceq_{se} W(x,y)$.
			\item[3c'.]  If
			$$(x,y)\in \left\{(x,y)\in \mathbb R^2_+: \, (a,b)\preceq_{ne} (x,y), \, b+{\theta+b\over 1+a}(x-a)\leq y\right\}$$ then
			$W(x,y)\preceq_{se} (a,b)$.
		\end{itemize}
		\item[4.] Assume $\theta< 1$.
		\begin{itemize}
			\item[4a.]	If $$(x,y)\in \left\{(x,y)\in \mathbb R^2_+: \, (x,y)\preceq_{ne} (a,b), \, b-{1+b\over \theta+a}(a-x)\leq y \leq b-{\theta+b\over 1+a}(a-x)\right\}$$ then
			$(a,b)\preceq_{ne} W(x,y)$.
			\item[4b.] If
			$$(x,y)\in \left\{(x,y)\in \mathbb R^2_+: \, (x,y)\preceq_{ne} (a,b), \, y\leq b-{1+b\over \theta+a}(a-x)\right\}$$ then
			$(a,b)\preceq_{se} W(x,y)$.
			\item[4c.]  If
			$$(x,y)\in \left\{(x,y)\in \mathbb R^2_+: \, (x,y)\preceq_{ne} (a,b), \, b-{\theta+b\over 1+a}(a-x)\leq y\right\}$$ then
			$W(x,y)\preceq_{se} (a,b)$.
			\item[4a'.]	If $$(x,y)\in \left\{(x,y)\in \mathbb R^2_+: \, (a, b)\preceq_{ne} (x, y), \, b+{\theta+b\over 1+a}(x-a)\leq y \leq b+{1+b\over \theta+a}(x-a)\right\}$$ then
			$W(x,y)\preceq_{ne} (a, b)$.
			\item[4b'.] If
			$$(x,y)\in \left\{(x,y)\in \mathbb R^2_+: \, (a, b)\preceq_{ne} (x, y), \, y\leq b+{\theta+b\over 1+a}(x-a)\right\}$$ then
			$(a,b)\preceq_{se} W(x,y)$.
			\item[4c'.]  If
			$$(x,y)\in \left\{(x,y)\in \mathbb R^2_+: \, (a, b)\preceq_{ne} (x, y), \, b+{1+b\over \theta+a}(x-a)\leq y\right\}$$ then
			$W(x,y)\preceq_{se} (a,b)$.
		\end{itemize}
	\end{itemize}
\end{pro}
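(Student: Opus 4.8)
The plan is to reduce everything to the coordinate-wise inequalities defining the SE and NE cones, and to verify each of the assertions by a short computation using the fixed-point relations (\ref{fop}). The starting point is the pair of identities (\ref{XY}) from Lemma \ref{se}: taking $(x_2,y_2)=(a,b)$ a fixed point and $(x_1,y_1)=(x,y)$ an arbitrary point, and writing $(x',y')=W(x,y)$, we get
$$x'-a=A\cdot\bigl[(\theta+b)(x-a)+(1+a)(b-y)\bigr],\qquad y'-b=B\cdot\bigl[(1+b)(a-x)+(\theta+a)(y-b)\bigr],$$
with $A,B>0$. Parts 1 and 2 are then immediate: if $(x,y)\preceq_{se}(a,b)$, i.e. $x\le a$ and $y\ge b$, then both bracketed expressions are $\le 0$, so $x'\le a$ and $y'\le b$, while also $x'\ge$?—here one must be slightly careful: $W(x,y)\preceq_{se}(a,b)$ means $x'\le a$ and $y'\ge b$, so I need $x'\le a$ (clear, both terms in the first bracket $\le 0$) and $y'\ge b$ (clear, both terms in the second bracket $\ge 0$ since $a-x\ge 0$ and $y-b\ge 0$). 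Part 2 is symmetric. So parts 1 and 2 follow directly from sign-chasing in the displayed identities.

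For parts 3 and 4 the idea is the same, but now the two terms in each bracket have opposite signs, so the conclusion depends on which term dominates — and this is exactly what the auxiliary linear inequalities in the hypotheses encode. Concretely, assume the NE relation $(x,y)\preceq_{ne}(a,b)$, so $a-x\ge 0$ and $b-y\ge 0$. Then from the identities,
$$x'\le a \iff (\theta+b)(a-x)\ge (1+a)(b-y)\iff y\ge b-\frac{\theta+b}{1+a}(a-x),$$
$$y'\le b \iff (1+b)(a-x)\ge (\theta+a)(b-y)\iff y\le b-\frac{1+b}{\theta+a}(a-x),$$
(using $a-x\ge 0$ to fix signs when dividing). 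Hence $W(x,y)\preceq_{ne}(a,b)$ precisely on the band where both inequalities hold, which is $b-\frac{\theta+b}{1+a}(a-x)\le y\le b-\frac{1+b}{\theta+a}(a-x)$ — this is 3a, provided the band is nonempty, i.e. provided $\frac{\theta+b}{1+a}\ge\frac{1+b}{\theta+a}$, equivalently $(\theta+b)(\theta+a)\ge(1+b)(1+a)$, equivalently $(\theta-1)(\theta+a+b+1)\ge 0$, i.e. $\theta\ge 1$: this is why part 3 assumes $\theta\ge 1$ and part 4 assumes $\theta<1$ (where the roles of the two lines swap, giving the reversed ordering of the bounds in 4a and the NE-from-below conclusion). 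Parts 3b, 3c and 4b, 4c are the "one-sided" statements: 3c asks for $W(x,y)\preceq_{se}(a,b)$, i.e. $x'\le a$ and $y'\ge b$; we already have $x'\le a$ iff $y\ge b-\frac{\theta+b}{1+a}(a-x)$, and $y'\ge b\iff (1+b)(a-x)\le(\theta+a)(b-y)$ — wait, I must recheck the direction, but in any case $y'\ge b$ reduces to a single linear inequality in $y$, and combining it with $x'\le a$ leaves the single constraint that appears in the hypothesis (the other becoming automatic on $(x,y)\preceq_{ne}(a,b)$). Parts 3b similarly produces $(a,b)\preceq_{se}W(x,y)$, i.e. $x'\ge a$, $y'\le b$. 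The primed items 3a', 3b', 3c', 4a', 4b', 4c' are the mirror images obtained by instead assuming $(a,b)\preceq_{ne}(x,y)$, so that $x-a\ge 0$, $y-b\ge 0$, and repeating the identical sign analysis with the inequalities reflected through $(a,b)$; the threshold lines become $y=b+\frac{\theta+b}{1+a}(x-a)$ and $y=b+\frac{1+b}{\theta+a}(x-a)$.

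So the structure of the write-up is: (i) record the two identities above as the specialization of (\ref{XY}); (ii) dispatch parts 1–2 by direct sign inspection; (iii) for part 3, under $(x,y)\preceq_{ne}(a,b)$, translate "$x'\le a$", "$x'\ge a$", "$y'\le b$", "$y'\ge b$" each into an explicit linear inequality in $y$, note that $\theta\ge1$ makes the slope $\frac{\theta+b}{1+a}$ at least $\frac{1+b}{\theta+a}$, and read off 3a, 3b, 3c by intersecting the appropriate pair of half-planes; (iv) part 4 is verbatim the same with the inequality $\theta<1$ reversing the slope comparison, which accounts for the interchanged bounds and the $\preceq_{ne}$-from-below conclusion in 4a; (v) the primed statements follow by the reflection $(a-x,b-y)\mapsto(x-a,y-b)$. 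The only genuinely delicate point — and the one I would be most careful about — is bookkeeping of the signs when dividing the linear inequalities by $a-x$ (or $x-a$): this is legitimate only because the ambient hypothesis is a $\preceq_{ne}$ comparison with $(a,b)$, which guarantees $a-x$ (resp. $x-a$) is nonnegative, and when $a-x=0$ the inequalities degenerate but remain consistent. I do not expect any nontrivial obstacle beyond this sign-tracking; there is no fixed-point structure theory needed here, only the algebraic identities (\ref{XY}) together with the positivity of $A$ and $B$.
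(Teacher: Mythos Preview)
Your proposal is correct and follows essentially the same approach as the paper: specialize the identities (\ref{XY}) at the fixed point $(a,b)$ to obtain the two linear expressions for $x'-a$ and $y'-b$, then read off each item as a system of linear inequalities. The paper's proof is in fact much terser than yours --- it just records the specialized identities and says ``for each item one has to solve a system of linear inequalities'' --- so your detailed sign-tracking and slope comparison $(\theta+b)(\theta+a)\ge(1+a)(1+b)\iff\theta\ge1$ actually supplies the details the paper omits.
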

\begin{proof} Parts 1-2 follow from Lemma \ref{se}.
	For $(x,y)\in \mathbb R_+^2$ we write $(x', y')=W(x,y)$. Then taking $(x_1, y_1)=(x, y)$, $(x_2, y_2)=(a, b)$, since $(a,b)$ is a fixed point, from (\ref{XY}) we get
	\begin{equation}\label{AB}\begin{array}{ll}
			x'-a=A\cdot [(\theta+b)(x-a)+(1+a)(b-y)],\\[2mm]
			y'-b=-B\cdot [(1+b)(x-a)+(\theta+a)(b-y)],
		\end{array}
	\end{equation}
	where $A>0$, $B>0$. For each item mentioned in proposition  by (\ref{AB}) one has to solve a system of linear inequalities.
\end{proof}

For $x>0$ define function
$$\psi(x)=\sqrt{L}\left(\frac{1}{\sqrt{x}}+\sqrt{x}\right)-\left(1+\theta+x\right).$$
Consider the following partitions of $\mathbb R^2_+$:
$$\mathbb R^2_+=A_{se}^<\cup A_{se}^>\cup A_{ne}^<\cup A_{ne}^>, $$
where
$$A_{se}^<=\{(x,y)\in \mathbb R_+^2: x\geq \psi(y), \, y\leq \psi(x)\},$$
$$A_{se}^>=\{(x,y)\in \mathbb R_+^2: x\leq \psi(y), \, y\geq \psi(x)\},$$
$$A_{ne}^<=\{(x,y)\in \mathbb R_+^2: x\leq \psi(y), \, y\leq \psi(x)\},$$
$$A_{ne}^>=\{(x,y)\in \mathbb R_+^2: x\geq \psi(y), \, y\geq \psi(x)\}.$$

\begin{pro}\label{psi} We have
	
	$$A_{se}^<=\{(x,y)\in \mathbb R^2_+: (x,y)\preceq_{se} W(x,y)\}.$$
	$$A_{se}^>=\{(x,y)\in \mathbb R^2_+: W(x,y)\preceq_{se} (x,y)\}.$$
	$${\rm Fix}(W)=\{(x,y)\in \mathbb R^2_+: x=\psi(y), \, y=\psi(x)\}.$$
	$$A_{ne}^<=\{(x,y)\in \mathbb R^2_+: (x,y)\preceq_{ne} W(x,y)\}.$$
	$$A_{ne}^>=\{(x,y)\in \mathbb R^2_+: W(x,y)\preceq_{ne} (x,y)\}.$$
	
\end{pro}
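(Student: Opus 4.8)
The plan is to reduce all five set identities to a single pair of scalar equivalences, one for each coordinate. Precisely, I claim that for $(x,y)\in\mathbb R^2_+$ the inequality $x\le x'$ holds if and only if $y\le\psi(x)$, and symmetrically $y\le y'$ holds if and only if $x\le\psi(y)$ (with the reversed inequalities, and the equalities, equivalent to the correspondingly reversed or equality scalar conditions). Granting this, each claimed description is immediate: $(x,y)\preceq_{se}W(x,y)$ means $x\le x'$ and $y\ge y'$, hence $y\le\psi(x)$ and $x\ge\psi(y)$, which is exactly the defining condition of $A_{se}^<$; likewise $W(x,y)\preceq_{se}(x,y)$ unpacks to $y\ge\psi(x)$ and $x\le\psi(y)$, i.e. $A_{se}^>$; the fixed-point condition $x=x'$, $y=y'$ becomes $y=\psi(x)$ and $x=\psi(y)$; and $(x,y)\preceq_{ne}W(x,y)$, resp. $W(x,y)\preceq_{ne}(x,y)$, become $\{y\le\psi(x),\ x\le\psi(y)\}$, resp. $\{y\ge\psi(x),\ x\ge\psi(y)\}$, which are $A_{ne}^<$ and $A_{ne}^>$.

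To establish the scalar equivalence, I would start from $x\le x'=L\left(\dfrac{1+x}{1+\theta+x+y}\right)^2$. Since every quantity is strictly positive, this is equivalent to $\sqrt{x}\le\sqrt{L}\,\dfrac{1+x}{1+\theta+x+y}$, and since $1+\theta+x+y>0$ this is in turn equivalent to $1+\theta+x+y\le\sqrt{L}\,\dfrac{1+x}{\sqrt{x}}$. Writing $\dfrac{1+x}{\sqrt{x}}=\dfrac{1}{\sqrt{x}}+\sqrt{x}$ and isolating $y$, we arrive precisely at $y\le\sqrt{L}\left(\dfrac{1}{\sqrt{x}}+\sqrt{x}\right)-(1+\theta+x)=\psi(x)$. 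Every step is a genuine two-way equivalence (square roots of positive numbers, clearing a positive denominator, adding constants), so $x\le x'\iff y\le\psi(x)$; replacing $\le$ by $\ge$ throughout gives $x\ge x'\iff y\ge\psi(x)$, and replacing it by $=$ gives $x=x'\iff y=\psi(x)$. The computation for the second coordinate is the same with the roles of $x$ and $y$ interchanged, giving $y\le y'\iff x\le\psi(y)$ and its reversed and equality analogues.

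There is no real obstacle in this argument; it is essentially bookkeeping. The two points requiring a little care are: keeping track of inequality directions under the SE order, where the second coordinate is reversed, versus the NE order where it is not; and checking that the passages $x\le x'\iff\sqrt{x}\le\sqrt{x'}$ and the clearing of the denominator are equivalences rather than one-way implications — both hold because we work on $\mathbb R^2_+$ with $\theta>0$ and $L>0$, so all the quantities involved are positive. Assembling the five identities from the definitions of $A_{se}^<$, $A_{se}^>$, $A_{ne}^<$, $A_{ne}^>$ and ${\rm Fix}(W)$ then completes the proof.
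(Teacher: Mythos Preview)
Your argument is correct and follows essentially the same route as the paper: reduce each coordinate inequality to a scalar condition by taking square roots and clearing the positive denominator, arriving at $y\le\psi(x)$ and $x\le\psi(y)$. The paper only writes out the first identity explicitly and leaves the rest implicit, whereas you spell out the bidirectionality of each step and assemble all five identities; this added care is welcome but does not constitute a different method.
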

\begin{proof} Let us prove the first equality. Left hand side is
$$\begin{array}{ll}
	x\leq x'=L\left(1+x\over 1+\theta+x+y\right)^2,\\[3mm]
y\geq y'=L\left(1+y\over 1+\theta+x+y\right)^2.
\end{array}
$$
Rewrite this system as:
$$\begin{array}{ll}\sqrt{L}\left(1+x\over 1+\theta+x+y\right)-\sqrt{x}\geq0,\\[3mm]
\sqrt{L}\left(1+y\over 1+\theta+x+y\right)-\sqrt{y}\leq0.
\end{array}
$$
Consequently,
$$\begin{array}{ll}\sqrt{L}\left(\frac{1}{\sqrt{x}}+\sqrt{x}\right)-\left(1+\theta+x\right)\geq y,\\[3mm]
\sqrt{L}\left(\frac{1}{\sqrt{y}}+\sqrt{y}\right)-\left(1+\theta+y\right)\leq x.\end{array}$$
Hence
$$y\leq\psi(x), \ \ x\geq\psi(y).$$
This completes the proof (see Fig. \ref{fa1}).
\end{proof}
\begin{figure}
\begin{center}
\begin {tikzpicture} [scale=0.8]
\draw[->, thick] (0,0) -- (0,10);
\draw[->, thick] (0,0) -- (11,0);
\draw[] (1,-0.1) -- (1,0.1);
\draw[] (2,-0.1) -- (2,0.1);
\draw[] (3,-0.1) -- (3,0.1);
\draw[] (4,-0.1) -- (4,0.1);
\draw[] (5,-0.1) -- (5,0.1);
\draw[] (6,-0.1) -- (6,0.1);
\draw[] (7,-0.1) -- (7,0.1);
\draw[] (8,-0.1) -- (8,0.1);
\draw[] (9,-0.1) -- (9,0.1);
\draw[] (10,-0.1) -- (10,0.1);
\draw[] (-0.1,1) -- (0.1,1);
\draw[] (-0.1,2) -- (0.1,2);
\draw[] (-0.1,3) -- (0.1,3);
\draw[] (-0.1,4) -- (0.1,4);
\draw[] (-0.1,5) -- (0.1,5);
\draw[] (-0.1,6) -- (0.1,6);
\draw[] (-0.1,7) -- (0.1,7);
\draw[] (-0.1,8) -- (0.1,8);
\draw[] (-0.1,9) -- (0.1,9);
\draw[] (0,0)-- (9,9);
\draw[thick, red] (0.5,10).. controls (0.52,0.3) .. (1, 0.2) .. controls (2.4,4) and (3,3) .. (9,0);
\draw[thick, green] (11 ,0.5).. controls (0.3,0.52) .. (0.2,1) .. controls (4,2.4) and (3,3) .. (0,9);
\node[below] at (1,0){1};
\node[below] at (2,0){2};
\node[below] at (3,0){3};
\node[below] at (4,0){4};
\node[below] at (5,0){5};
\node[below] at (6,0){6};
\node[below] at (7,0){7};
\node[below] at (8,0){8};
\node[below] at (9,0){9};
\node[below] at (10,0){10};
\node[above] at (11,0){$\lambda_1$};
\node[left] at (0,1){1};
\node[left] at (0,2){2};
\node[left] at (0,3){3};
\node[left] at (0,4){4};
\node[left] at (0,5){5};
\node[left] at (0,6){6};
\node[left] at (0,7){7};
\node[left] at (0,8){8};
\node[left] at (0,9){9};
\node[right] at (0,10){$L$};
\node[right] at (0.6,8){$p_4$};
\node[right] at (8,0.7){$p_3$};
\node[left] at (0.75,0.4){$p_1^*$};
\node[left] at (0.75,1.3){$p_2$};
\node[left] at (1.9,0.8){$p_1$};
\node[left] at (1.7,1.8){$p_2^*$};
\node[left] at (2.65,2.8){$p_3^*$};
\end{tikzpicture}
\caption{Graphs of $x=\psi(y)$ (green) and $y=\psi(x)$ (red) in case of having 7 fixed points. The sets mentioned in Proposition \ref{psi} can be seen between red and green curves.}\label{fa1}
\end{center}
\end{figure}
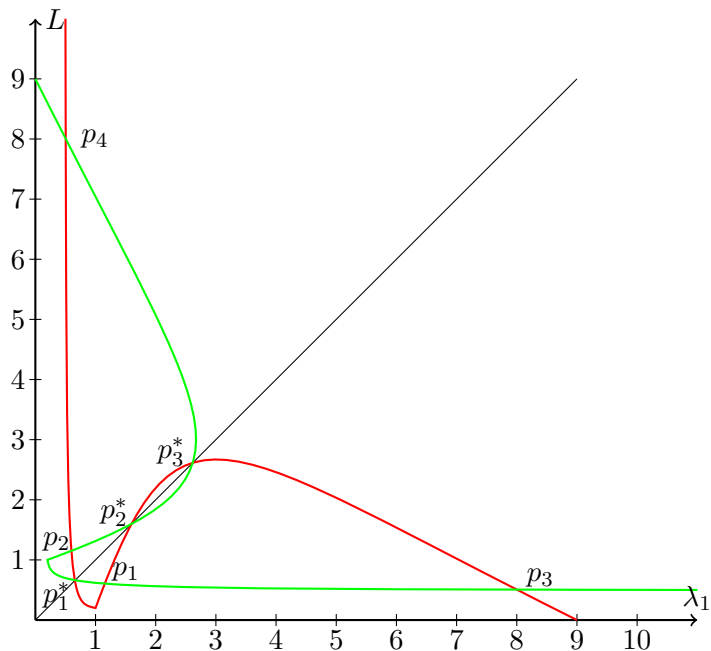
For a fixed point $p\in {\rm Fix}(W)$, we denote
$$B^<(p)=\{(x,y)\in M_-: (x,y)\prec_{se} W(x,y)\preceq_{se} p\},$$
$$B^>(p)=\{(x,y)\in M_-: p\preceq_{se} W(x,y)\prec_{se} (x,y)\},$$
$$R^<(p)=\{(x,y)\in M_-: W(x,y)\prec_{se} (x,y)\preceq_{se} p\},$$
$$R^>(p)=\{(x,y)\in M_-: p\preceq_{se} (x,y)\prec_{se} W(x,y)\}.$$

\begin{lemma} For any $p\in {\rm Fix}(W)$ the sets  $B^<(p)$, $B^>(p)$, $R^<(p)$, $R^>(p)$ are invariant with respect to $W$.
\end{lemma}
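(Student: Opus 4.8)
The plan is to show each of the four sets is mapped into itself by exploiting two facts already established: the competitivity of $W$ (Lemma \ref{se}), which gives $W(x,y)\preceq_{se}W(x',y')$ whenever $(x,y)\preceq_{se}(x',y')$, and Parts 1--2 of Proposition \ref{hamma}, which say that for a fixed point $p$ the two half-order-intervals $\{(x,y):(x,y)\preceq_{se}p\}$ and $\{(x,y):p\preceq_{se}(x,y)\}$ are each invariant. The defining conditions of $B^<(p)$, $B^>(p)$, $R^<(p)$, $R^>(p)$ are all of the form ``a monotone $\prec_{se}$-chain of three points $(x,y)$, $W(x,y)$, $p$ in some prescribed order,'' so the argument in each case is: start from one such chain, apply $W$, and use monotonicity to produce the next chain. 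I also need to keep the image inside $M_-$, which follows from invariance of $M_-$ under $W$ (the lemma stated just before the subsection on dynamics on $M_0$).

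Concretely, take $(x,y)\in B^<(p)$, so $(x,y)\prec_{se}W(x,y)\preceq_{se}p$, and write $(x',y')=W(x,y)$. Applying the competitive map $W$ to the inequality $(x,y)\preceq_{se}(x',y')$ gives $(x',y')\preceq_{se}W(x',y')$, and since $W$ is in fact strictly order preserving off the diagonal (and $(x,y)\ne(x',y')$ because the point is not fixed — points of $M_-$ are never fixed unless the relevant $t_i$ coincide, which is excluded here) this is strict: $(x',y')\prec_{se}W(x',y')$. Next, from $(x',y')\preceq_{se}p$ and Part 2 of Proposition \ref{hamma} (with the roles set so that $p\preceq_{se}W(x',y')$ fails — rather one uses Part 1, applied to $(x',y')\preceq_{se}p$) we get $W(x',y')\preceq_{se}p$. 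Hence $(x',y')\prec_{se}W(x',y')\preceq_{se}p$, i.e.\ $W(x,y)=(x',y')\in B^<(p)$. The case $R^<(p)$ is symmetric with all $\prec_{se}$ reversed, using that $(x,y)\preceq_{se}p$ implies (Part 1) $W(x,y)\preceq_{se}p$, and $W(x,y)\prec_{se}(x,y)$ implies, by monotonicity, $W(W(x,y))\prec_{se}W(x,y)$; and $B^>(p)$, $R^>(p)$ are handled identically using Part 2 instead of Part 1. In every case the image stays in $M_-$ because $M_-$ is $W$-invariant.

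The only subtlety — and the main thing to check carefully — is the strictness of the middle inequality, i.e.\ that $W(x,y)\ne W^2(x,y)$ (equivalently $W(x,y)$ is not a fixed point) whenever $(x,y)$ is not a fixed point. Since $(x,y)\prec_{se}W(x,y)$ is a \emph{strict} inequality in the hypothesis, and $W$ restricted to $M_-$ is strictly order preserving with respect to $\preceq_{se}$ (this is exactly what formula (\ref{XY}) delivers: if $(x_1,y_1)\prec_{se}(x_2,y_2)$ then either $x_1<x_2$ or $y_1>y_2$, and one checks the corresponding strict inequality propagates to $(x_1',y_1')$, $(x_2',y_2')$), the images remain strictly ordered, so no chain collapses. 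I would state this strict-monotonicity remark once at the start of the proof and then simply say ``the same argument, with all $\preceq_{se}$-inequalities and their strict versions preserved, applies to the remaining three sets.''
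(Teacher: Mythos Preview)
Your proposal is correct and follows essentially the same route as the paper: the paper's proof is the single line ``Follows from Lemma~\ref{se} and part~1, part~2 of Proposition~\ref{hamma},'' and you unpack exactly that combination (competitivity of $W$ plus the $\preceq_{se}$-invariance of the two half-intervals determined by a fixed point). Your discussion of the strictness issue, handled via the strict order-preserving property coming from (\ref{XY}), is a detail the paper does not spell out but which your argument needs and correctly supplies.
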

\begin{proof}
	Follows from Lemma \ref{se} and part 1, part 2 of Proposition \ref{hamma}.
\end{proof}
Recall notations given in and before Lemma \ref{lfp}, then by Proposition \ref{limp} and above mentioned lemmas we get
\begin{thm}\label{ta} For initial point $v^{(0)}\in \mathbb R^2_+\setminus M_+$, the following statements hold
$$\lim_{n\to\infty}W^n(v^{(0)})=\left\{\begin{array}{lllllll}
	p_1^*, \ \ \mbox{if} \ \ (\theta, L)\in A_{1,0};\\[2mm]
	p_2, \ \ \mbox{if} \ \ (\theta, L)\in A_{1,2}, \forall v^{(0)}\in M_-;\\[2mm]
p_2, \ \ \mbox{if} \ \ (\theta, L)\in A_{1,4},
\forall v^{(0)}\in (B^<(p_2)\cup B^>(p_2))\bigcup(R^<(p_4)\cup R^>(p_4));\\[2mm]
p_4, \ \ \mbox{if} \ \ (\theta, L)\in A_{1,4},
\forall v^{(0)}\in (B^<(p_4)\cup B^>(p_4))\bigcup(R^<(p_2)\cup R^>(p_2));\\[2mm]
p, \ \ \mbox{if} \ \ (\theta, L)\in \bigcup_{{i=2,3\atop j=2,4}} A_{i,j}, p\in {\rm Fix}(W),
\forall v^{(0)}\in (B^<(p)\cup B^>(p)).
\end{array}
	\right.
	$$
	\end{thm}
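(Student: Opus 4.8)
The plan is to combine the monotonicity structure developed in the preceding propositions with the global bounds of Proposition \ref{limp}. Recall that by Lemma \ref{se} the operator $W$ is competitive (monotone for $\preceq_{se}$), and by Proposition \ref{limp}(c) the $\omega$-limit set of every trajectory started in $\mathbb R^2_+$ lies in the order interval $[\hat\ell_1,\hat\ell_2]^2$, where $\hat\ell=(\hat\ell_1,\hat\ell_2)$ has minimal first coordinate and maximal second coordinate among all fixed points. For $v^{(0)}\in M_-$ (the diagonal case $M_0$ being already handled in Section 3.3, and $M_+$ being symmetric under swapping coordinates), the first move is to invoke invariance of $M_-$ to stay in that set, and to observe that for a competitive map the four sets $B^<(p)$, $B^>(p)$, $R^<(p)$, $R^>(p)$ attached to each fixed point $p$ are invariant (the lemma just before this theorem). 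The union of these sets over all fixed points, together with the fixed points themselves, covers $M_-$: this follows from Proposition \ref{psi}, which identifies $A_{se}^<$ and $A_{se}^>$ with the sets where $(x,y)\preceq_{se}W(x,y)$ and $W(x,y)\preceq_{se}(x,y)$ respectively, so every point of $M_-$ is $\preceq_{se}$-comparable to its image.

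Next I would treat the cases one at a time according to the partition of the parameter plane. For $(\theta,L)\in A_{1,0}$ there is a unique fixed point $p_1^*$ lying on $M_0$; then $\hat\ell_1=\hat\ell_2$, and Proposition \ref{limp}(b) gives convergence to $p_1^*$ directly. For $(\theta,L)\in A_{1,2}$ there is exactly one fixed point on $M_-$, namely $p_2$ (the other off-diagonal fixed point $p_1$ lies in $M_+$); here I would apply Theorem \ref{tK} to a suitable order interval $[[a,b]]\subset M_-$ containing $p_2$, with $a\preceq_{se} W(a)$ and $W(b)\preceq_{se} b$ chosen via Proposition \ref{psi} (any point of $A_{se}^<$ below $p_2$ and any point of $A_{se}^>$ above it), so that the unique fixed point in the interval is a global attractor relative to it; exhausting $M_-$ by such intervals yields $\omega(v^{(0)})=\{p_2\}$ for all $v^{(0)}\in M_-$. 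For $(\theta,L)\in A_{1,4}$ there are two off-diagonal fixed points in $M_-$, namely $p_2$ and $p_4$ with $p_2\prec_{se}p_4$ (reading off from the explicit $t_i$); here the key point is that each of the invariant sets $B^<(p_2),B^>(p_2),R^<(p_4),R^>(p_4)$ is squeezed monotonically towards $p_2$ (for the first two, $W$-iterates increase in $\preceq_{se}$ up to $p_2$; for the last two they decrease down to $p_4$... — one uses here the absence of a third fixed point between them, via Proposition \ref{bas}, to rule out any other limit), and symmetrically for the $p_4$-sets, giving the stated dichotomy. For the remaining parameter regions $A_{i,j}$ with $i\in\{2,3\}$, the extra fixed points on $M_0$ contribute the sets $B^<(p)\cup B^>(p)$ for each $p\in{\rm Fix}(W)$, and monotone squeezing again forces convergence to that $p$.

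The main obstacle I anticipate is the bookkeeping needed to verify that the listed invariant sets actually exhaust the relevant part of $\mathbb R^2_+$ in each of the seven parameter regimes, and in particular to show that no trajectory gets ``trapped'' strictly between two consecutive fixed points without converging to one of them. This is where the Trichotomy-type statement Proposition \ref{bas} must be applied carefully: one has to check, for each pair of $\preceq_{se}$-consecutive fixed points $u_1\prec_{se}u_2$ in $M_-$, that there are no other fixed points in $[[u_1,u_2]]$, so that the interior of $[[u_1,u_2]]$ splits into the two basins. The explicit but bulky formulas for $t_1,t_2,t_3,t_4$ and for $x_1,x_2,x_3$ are what make this comparison technical; I would organize it by first establishing the $\preceq_{se}$-order among all fixed points (using that on $M_-$ the fixed points are totally ordered by $\preceq_{se}$, a general feature of competitive maps), and only then matching each order interval to the $B$- and $R$-sets. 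A secondary, more routine obstacle is checking strong order preservation of $W$ so that part (iii) of Theorem \ref{tK} applies; this follows from strict positivity of the off-diagonal entries of the Jacobian away from the axes, which is immediate from the formula for $J(u,v)$.
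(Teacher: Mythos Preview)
Your plan matches the paper's route in spirit: the paper's own proof is literally the single sentence ``by Proposition \ref{limp} and above mentioned lemmas we get'' Theorem \ref{ta}, so it too relies on the competitiveness of $W$ (Lemma \ref{se}), the invariance of the sets $B^{\lessgtr}(p)$, $R^{\lessgtr}(p)$, and the squeeze of Proposition \ref{limp}. Your fleshing-out via $\preceq_{se}$-monotone bounded sequences converging to fixed points is exactly what those lemmas are set up for.

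There is, however, one substantive overclaim you should drop. You assert that the union of the $B$- and $R$-sets, together with the fixed points, \emph{covers} $M_-$, deducing this from Proposition \ref{psi}. That proposition only says each point is $\preceq_{se}$-comparable to its \emph{image}; it does not force $\preceq_{se}$-comparability of $(x,y)$ (or of $W(x,y)$) with any fixed point, which is what membership in $B^{\lessgtr}(p)$ or $R^{\lessgtr}(p)$ requires. The paper's Remark immediately following the theorem says this explicitly: ``The union of subsets mentioned for $v^{(0)}$ does not cover $M_-$, so for the remaining possibilities of the initial point, we do not know their set of limit points.'' The theorem only asserts convergence for initial points lying in the displayed sets, and that is all you need to prove.

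Two smaller points. First, in the $A_{1,2}$ case your order-interval argument via Theorem \ref{tK}(iv) needs care: any $[[a,b]]_{se}$ large enough to absorb a general trajectory will typically contain $p_1^*\in M_0$ (since $t_2\le x_1\le t_1$), so uniqueness of the fixed point in $[[a,b]]$ fails and you must instead pass through Proposition \ref{bas} on $[[p_2,p_1^*]]$ and $[[p_1^*,p_1]]$, or restrict the domain to $M_-$ before invoking Theorem \ref{tK}. Second, your description for $A_{1,4}$ slips: you write that in $R^<(p_4),R^>(p_4)$ the iterates ``decrease down to $p_4$'', but the claim is convergence to $p_2$; the correct statement is that the $\preceq_{se}$-monotone sequence is bounded on the $p_4$ side and hence converges to a fixed point strictly $\prec_{se} p_4$, which in $M_-$ forces $p_2$ once you check (from the explicit $t_i$) that no fixed point of $W$ other than $p_2$ lies $\prec_{se} p_4$ inside $\overline{M_-}$.
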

\begin{rk} Let us note that
\begin{itemize}
	\item[1.] Theorem \ref{ta} is true for $M_+$ too, one has only replace $p_2, p_4$ by $p_1, p_3$ respectively.
	\item[2.] The union of subsets mentioned for $v^{(0)}$ does not cover $M_-$, so for the remaining possibilities of the initial point, we do not know their set of limit points. However, Proposition \ref{limp} states that, regardless of the convergence behavior, the set of limit points of any trajectory is a subset of $[\hat\ell_1, \hat\ell_2]^2$.
	\item[3.] If type of each fixed point is known then the last line of Theorem \ref{ta} can be given in more detail, using sets $R^<(p)$ and $R^>(p)$ too.
	\end{itemize}
\end{rk}
For an initial point $x^{(0)}=(x_1^{(0)}, x_2^{(0)}, \dots)\in  l^{1}_{+}$, introduce
\begin{equation}\label{vvo}v^{(0)}_1=\sum_{j=1}^{\infty}x^{(0)}_{2j-1}, \ \ v^{(0)}_2=\sum_{j=1}^{\infty}x^{(0)}_{2j}.
\end{equation}

By Lemma \ref{Le} and Remark \ref{ob} from Theorem \ref{ta} we obtain that
	\begin{thm}
If	initial point $x^{(0)}=(x_1^{(0)}, x_2^{(0)}, \dots)\in  l^{1}_{+}$ is such that
	$$\lim_{m\to \infty}W^m(v_1^{(0)}, v_2^{(0)})=(a,b)=p\in {\rm Fix}(W)$$
	for corresponding values (\ref{vvo}) then
	$$\lim_{m\to \infty}F^m(x^{(0)})={1\over L}\left(a\lambda_1, b\lambda_2, a\lambda_3, b\lambda_4, \dots\right).
	$$
\end{thm}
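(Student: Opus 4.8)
The plan is to trace the reduction already established in Lemma~\ref{Le} and spelled out in Remark~\ref{ob}, and simply observe that the present statement is its specialization to the case when the two-dimensional trajectory converges to a fixed point. First I would recall that, by the structure of the operator $F$ in (\ref{e4o}), the odd-indexed coordinates of $F^m(x^{(0)})$ depend on $x^{(0)}$ only through the two partial sums $v_1^{(m)}=\sum_j x^{(m)}_{2j-1}$ and $v_2^{(m)}=\sum_j x^{(m)}_{2j}$, and likewise for the even-indexed coordinates; and that the pair $(v_1^{(m)},v_2^{(m)})$ evolves exactly according to the two-dimensional operator $W$ of (\ref{op}) with $L_1=L_2=L$, i.e. $(v_1^{(m)},v_2^{(m)})=W^m(v_1^{(0)},v_2^{(0)})$. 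This is precisely the content of Lemma~\ref{Le}; the formula (\ref{xab}) of Remark~\ref{ob} records, moreover, that when $W^m(v_1^{(0)},v_2^{(0)})\to(a,b)$ one has $F^m(x^{(0)})\to\frac1L(a\lambda_1,b\lambda_2,a\lambda_3,b\lambda_4,\dots)$.

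Next I would note that the hypothesis of the theorem is exactly $\lim_{m\to\infty}W^m(v_1^{(0)},v_2^{(0)})=(a,b)=p\in{\rm Fix}(W)$, which is the case in which Theorem~\ref{ta} (together with the preceding Proposition~\ref{limp} and the invariant-set lemmas) guarantees convergence for a broad class of initial data; so the only thing to do is feed $(a,b)=p$ into (\ref{xab}). Writing out $P_i$ from Theorem~\ref{tfp} one sees that the right-hand side $\frac1L(a\lambda_1,b\lambda_2,a\lambda_3,b\lambda_4,\dots)$ is indeed the corresponding fixed point of $F$ when $p=p_i$ or $p=p^*_i$, so the limit statement is consistent with the fixed-point picture, but in fact no case analysis is needed: the displayed formula is valid for an arbitrary point $(a,b)$ to which the $W$-trajectory converges.

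The one point that requires a word of justification is the passage from coordinatewise convergence of the partial sums to convergence of $F^m(x^{(0)})$ in the $l^1$-norm. Here I would invoke that $\lambda\in l^1_+$: given $\varepsilon>0$ choose $N$ with $\sum_{j>N}\lambda_j<\varepsilon$; for the first $N$ coordinates use that $v_1^{(m)}\to a$, $v_2^{(m)}\to b$ and that each coordinate of $F^m(x^{(0)})$ is a continuous (indeed smooth, bounded) function of $(v_1^{(m)},v_2^{(m)})$ times a fixed $\lambda_k$, while $\|x^{(m)}\|$ stays bounded; for the tail use the uniform bound on the coordinate ratios together with $\sum_{j>N}\lambda_j<\varepsilon$. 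Summing the two estimates gives $\|F^m(x^{(0)})-\frac1L(a\lambda_1,b\lambda_2,\dots)\|\to 0$. This tail estimate is the only genuine (though routine) obstacle; everything else is bookkeeping inherited from Lemma~\ref{Le}, Remark~\ref{ob} and Theorem~\ref{ta}.
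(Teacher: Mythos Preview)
Your proposal is correct and follows the same route as the paper, which simply derives the theorem from Lemma~\ref{Le} and Remark~\ref{ob} (with Theorem~\ref{ta} supplying instances where the hypothesis holds). You go slightly further by spelling out the $l^1$ tail estimate needed to upgrade convergence of the two partial sums to norm convergence in $l^1_+$; the paper leaves this implicit in formula~(\ref{xab}), so your added paragraph is a welcome clarification rather than a departure.
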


\section*{Data availability statements}
The datasets generated during and/or analysed during the current study are available from the corresponding author (U.A.Rozikov) on reasonable request.

\section*{Conflicts of interest} The authors declare no conflicts of interest.

\section*{Acknowledgements}

U.~Rozikov thanks the Weierstrass Institute for Applied Analysis and Stochastics, Berlin, Germany for support of his visit.  His  work was partially supported through a grant from the IMU--CDC and the fundamental project (grant no.~F--FA--2021--425) of The Ministry of Innovative Development of the Republic of Uzbekistan.

\end{document}